\numberwithin{equation}{section}
\newtheorem{theorem}{\bf Theorem}[section]
\newtheorem{lem}{\bf Lemma}[section]
\newtheorem{cor}{\bf Corollary}[section]
\newtheorem{remark}{\bf Remark}[section]
\newtheorem{example}{\bf Example}[section]
\newtheorem{question}{\bf Question}
\newcommand\dv{\mathrm{div}}
\newcommand\tr{\mathrm{tr}}
\begin{document}

\title[Estimates of eigenvalues of an elliptic differential system]{Estimates of eigenvalues of an elliptic differential system in divergence form}
\author[M.C. Araújo Filho]{Marcio C. Araújo Filho$^1$}
\author[J.N.V. Gomes]{José N.V. Gomes$^2$}
\address{$^1$Departamento de Matemática, Universidade Federal de Rondônia, Campus Ji-Paraná, R. Rio Amazonas, 351, Jardim dos Migrantes, 76900-726 Ji-Paraná, Rondônia, Brazil}
\address{$^2$Departamento de Matemática, Universidade Federal de São Carlos, Rod. Washington Luíz, Km 235, 13565-905 São Carlos, São Paulo, Brazil}
\email{$^1$marcio.araujo@unir.br}
\email{$^2$jnvgomes@ufscar.br}
\keywords{Eigenvalue problems, Estimate of eigenvalues, Elliptic differential system, Gaussian soliton, Rigidity results.}
\subjclass[2010]{Primary 47A75; Secondary 47F05, 35P15, 53C24, 53C25}

\begin{abstract}
In this paper, we compute universal estimates of eigenvalues of a coupled system of elliptic differential equations in divergence form on a bounded domain in Euclidean space. As an application, we show an interesting case of rigidity inequalities of the eigenvalues of the Laplacian, more precisely, we consider a countable family of bounded domains in Gaussian shrinking soliton that makes the behavior of known estimates of the eigenvalues of the Laplacian invariant by a first-order perturbation of the Laplacian. We also address the Gaussian expanding soliton case in two different settings. We finish with the special case of divergence-free tensors which is closely related to the Cheng-Yau operator.
\end{abstract}
\maketitle

\section{Introduction}
Let $\mathbb{R}^n$ be the $n$-dimensional Euclidean space with its canonical metric $\langle,\rangle$, and $\Omega\subset\mathbb{R}^n$ be a bounded domain with smooth boundary $\partial\Omega$. Let us consider a symmetric positive definite $(1,1)$-tensor  $T$ on $\mathbb{R}^n$ and a function $\eta\in C^2(\mathbb{R}^n)$, so that we can define a second-order elliptic differential operator $\mathscr{L}$ in the $(\eta,T)$-divergence form as follows:
\begin{equation}\label{eq1.1}
    \mathscr{L}f :=\dv_\eta (T(\nabla f)) = \dv(T(\nabla f)) - \langle \nabla \eta, T(\nabla f) \rangle,
\end{equation}
where $\dv$ stands for the divergence operator and $\nabla$ for the gradient operator. Since $\Omega$ is bounded, there exist two positive real constants $\varepsilon$ and $\delta$, such that $\varepsilon I\leq T\leq \delta I$, where $I$ is the $(1, 1)$-tensor identity on $\mathbb{R}^n$.

The analysis of the sequence of the eigenvalues of elliptic differential operators in divergence forms in bounded domains in $\mathbb{R}^n$ is an interesting topic in both mathematics and physics. In particular, problems linking the shape of a domain to the spectrum of an operator are among the most fascinating of mathematical analysis. One of the reasons which make them so attractive is that they involve diﬀerent ﬁelds of mathematics such as spectral theory, Riemannian geometry, and partial diﬀerential equations. Not only the literature about this subject is already very rich, but also it is not unlikely that operators in divergence forms may play a fundamental role in the understanding of countless physical facts.

In this paper, we address the eigenvalue problem for an operator which is a second-order perturbation of $\mathscr{L}$. More precisely, we compute universal estimates of the eigenvalues of the coupled system of second-order elliptic differential equations, namely:
\begin{equation}\label{problem1}
    \left\{\begin{array}{ccccc} 
    \mathscr{L}  {\bf u} + \alpha \nabla(\dv_\eta {\bf u}) &=& -\sigma {\bf u} & \mbox{in } & \Omega,\\
     {\bf u}&=&0 & \mbox{on} & \partial\Omega, 
    \end{array} 
    \right.
\end{equation}
where ${\bf u}=(u^1, u^2, \ldots, u^n)$ is a vector-valued function from $\Omega$ to $\mathbb{R}^n$, the constant $\alpha$ is non-negative and $\mathscr{L}{\bf u}=(\mathscr{L}u^1, \mathscr{L}u^2, \ldots,\mathscr{L} u^n)$. 

We will see that $\mathscr{L}+\alpha\nabla\dv_\eta$ is a formally self-adjoint operator in the Hilbert space $\mathbb{L}^2(\Omega,dm)$ of all vector-valued functions that vanish on $\partial \Omega$ in the sense of the trace. It follows from inner product induced by Eqs.~\eqref{parts} and \eqref{divformula} in Section~\ref{preliminar}. Thus the eigenvalue problem~\eqref{problem1} has a real and discrete spectrum 
\begin{equation}\label{equationn1.3}
    0 < \sigma_1 \leq \sigma_2 \leq \cdots \leq \sigma_k \leq \cdots\to\infty,
\end{equation}
where each $\sigma_i$ is repeated according to its multiplicity.

A special case that we can obtain from Problem~\ref{problem1} occurs when $T$ is divergence-free, see Problem~\ref{problem1-3}. For the sake of convenience, we address this case in Section~\ref{Sec-DCYOp}. Some results from Problems~\ref{problem3} and \ref{problem4} below are particular cases of this section. However, these two latter problems still remain prototype for us. In the next two paragraphs, we make brief comments about them.

When $\eta$ is a constant and $T$ is the identity operator $I$ on $\mathbb{R}^n$, Problem~\eqref{problem1} becomes
 \begin{equation}\label{problem3}
    \left\{ \begin{array}{ccccc} 
    \Delta  {\bf u} + \alpha \nabla(\dv\;{\bf u}) &=& -\sigma {\bf u} & \mbox{in} & \Omega,  \\
     {\bf u}&=& 0 & \mbox{on} & \partial \Omega,
    \end{array} 
    \right.
\end{equation}
where $\Delta{\bf u}=(\Delta u^1,\ldots,\Delta u^n)$ and $\Delta$ is the Laplacian operator on $C^\infty(\Omega)$. The operator $\Delta+\alpha\nabla \dv$ is known as Lamé's operator. In the $3$-dimensional case it shows up in the elasticity theory  and $\alpha$ is determined by the positive constants of Lamé, so the assumption $\alpha \geq 0$ is justified. For further details on this issue, the interested reader can consult Pleijel~\cite{Pleijel} or Kawohl and Sweers~\cite{KawohlSweers}. It is worth mentioning here the works of Levine and Protter~\cite{LevineProtter}, Livitin and Parnovski~\cite{LevitinParnovski}, Hook~\cite{Hook}, Cheng and Yang~\cite{ChengYang} and Chen et al.~\cite{CCWX} in which we can find some interesting estimates of the eigenvalues  of Problem~\eqref{problem3}. We will be more precise later when we will discuss the three latter papers.

When $\eta$ is not necessarily constant and $T=I$, Problem~\eqref{problem1} is rewritten as 
\begin{equation}\label{problem4}
    \left\{ \begin{array}{ccccc} 
    \Delta_\eta  {\bf u} + \alpha \nabla \mbox{(div}_\eta{\bf u}) &=& -\sigma {\bf u} & \mbox{in} &\Omega,  \\
     {\bf u}&=&0 & \mbox{on} & \partial \Omega,
    \end{array} 
    \right.
\end{equation}
where $\Delta_{\eta}{\bf u}=(\Delta_\eta u^1,\ldots,\Delta_\eta u^n)$ and $\Delta_\eta = \dv_\eta \nabla$ is the drifted Laplacian operator on $C^\infty(\Omega)$. The drifted Laplacian as well as the Bakry-Emery Ricci tensor $Ric+\nabla^2\eta$ are the most appropriate geometric objects to study the smooth metric measure spaces $(M^n,g,e^{-\eta}dvol_g)$. In particular, the Bakry-Emery Ricci tensor has been especially studied in the theory of Ricci solitons, since a gradient Ricci soliton $(M^n,g,\eta)$ is characterized by $Ric+\nabla^2\eta=\lambda g$, for some constant $\lambda$.

In Corollary~\ref{corollary2.4}, we show an interesting case of rigidity inequalities of eigenvalues of the Laplacian in a countable family of bounded domains in Gaussian shrinking soliton $(\mathbb{R}^n,\delta_{ij},\frac{\lambda}{2}|x|^2)$ by taking a specific isoparametric function as being the drifting function $\eta$, see Remarks~\ref{remark1-1} and \ref{remark-rigidity}. We address the Gaussian expanding soliton case in Corollaries~\ref{Expanding case1} and~\ref{Expanding case2}.

Throughout this paper $dm=e^{-\eta}d\Omega$ stands for the weight volume form on $\Omega$ and $|\cdot|$ for the Euclidean norm. Moreover, let us define
\begin{eqnarray*}
\nabla {\bf u} = ( \nabla u^1, \ldots ,  \nabla u^n) \quad \mbox{and} \quad T(\nabla{\bf u}) = (T(\nabla u^1), \ldots , T(\nabla u^n)),
\end{eqnarray*}
so that
\begin{eqnarray}\label{norm-Tu}
\|T (\nabla {\bf u})\|^2 = \int_\Omega \sum_{j=1}^n |T(\nabla u^j)|^2 dm = \int_\Omega |T(\nabla {\bf u})|^2 dm.
\end{eqnarray}

Henceforth, since there is no danger of confusion, we are using the same notation $\|\cdot\|$ for the norm in \eqref{norm-Tu} as well as for the canonical norm of a real-valued function in $L^2(\Omega,dm)$.

Our proofs will be facilitated by analyzing the more general setting in which the function $\eta$ is not necessarily constant and $T$ is not necessarily the identity. In this case, we prove a universal quadratic estimate for the eigenvalues of Problem~\eqref{problem1}, which is an essential tool to obtain some of our estimates.
\begin{theorem}\label{theorem1.1}
Let $\Omega \subset \mathbb{R}^n$ be a  bounded domain and ${\bf u}_i$ be a normalized eigenfunction corresponding to $i$-th eigenvalue $\sigma_i$ of Problem~\eqref{problem1}. Then, for any positive integer $k$, we get
\begin{equation*}
    \sum_{i=1}^k(\sigma_{k+1}-\sigma_i)^2 \leq \frac{4\delta(n\delta+\alpha)}{n^2\varepsilon^2}\sum_{i=1}^k(\sigma_{k+1}-\sigma_i)\Big\{\Big[(\sigma_i - \alpha \|\dv_\eta{\bf u}_i\|^2)^{\frac{1}{2}}  + \frac{T_0}{2\sqrt{\delta}} \Big]^2 + \frac{C_0}{\delta}\Big\},
\end{equation*}
where
\begin{equation}\label{C_0}
    C_0=\sup_{\Omega}{\Big \{}\frac{1}{2}\dv (T^2(\nabla \eta)) - \frac{1}{4}|T(\nabla \eta)|^2{\Big \}} + \frac{\delta}{2} T_0\eta_0,
\end{equation}
$T_0=\sup_{\Omega}|\tr(\nabla T)|$ and $\eta_0=\sup_{\Omega}|\nabla \eta|$.
\end{theorem}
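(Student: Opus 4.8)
The plan is to run a Payne--Pólya--Weinberger/Yang-type argument adapted to the system \eqref{problem1}. Writing $P:=-(\mathscr{L}+\alpha\nabla\dv_\eta)$, which is self-adjoint and has positive spectrum on $\mathbb{L}^2(\Omega,dm)$ by the formulas \eqref{parts} and \eqref{divformula}, I would use the coordinate functions $x_1,\dots,x_n$ as multipliers and set, for $1\le i\le k$ and $1\le\beta\le n$,
\[
\Phi_{\beta i}=x_\beta\mathbf{u}_i-\sum_{j=1}^{k}a_{\beta ij}\mathbf{u}_j,\qquad a_{\beta ij}=\int_\Omega x_\beta\langle\mathbf{u}_i,\mathbf{u}_j\rangle\,dm .
\]
By construction $\Phi_{\beta i}$ is orthogonal to $\mathbf{u}_1,\dots,\mathbf{u}_k$, so the Rayleigh--Ritz characterization of $\sigma_{k+1}$ gives $\sigma_{k+1}\|\Phi_{\beta i}\|^2\le\int_\Omega\langle P\Phi_{\beta i},\Phi_{\beta i}\rangle\,dm$. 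Expanding the right-hand side with $P\mathbf{u}_i=\sigma_i\mathbf{u}_i$ collapses everything onto the commutator $\mathbf{q}_{\beta i}:=[\mathscr{L}+\alpha\nabla\dv_\eta,\,x_\beta]\mathbf{u}_i$ and yields the basic inequality
\[
(\sigma_{k+1}-\sigma_i)\|\Phi_{\beta i}\|^2\le-\int_\Omega\langle\mathbf{q}_{\beta i},\Phi_{\beta i}\rangle\,dm .
\]

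The second step is the explicit commutator computation. Using $\nabla(x_\beta f)=f\nabla x_\beta+x_\beta\nabla f$ together with the product rule for $\dv_\eta$, a direct calculation gives $\mathbf{q}_{\beta i}=2\,T(\nabla x_\beta)\!\cdot\!\nabla\mathbf{u}_i+(\mathscr{L}x_\beta)\mathbf{u}_i+\alpha\nabla u_i^\beta+\alpha(\dv_\eta\mathbf{u}_i)\nabla x_\beta$, where $T(\nabla x_\beta)\!\cdot\!\nabla\mathbf{u}_i$ denotes the vector with components $\langle T(\nabla x_\beta),\nabla u_i^a\rangle$ and $\mathscr{L}x_\beta=\dv(T(\nabla x_\beta))-\langle\nabla\eta,T(\nabla x_\beta)\rangle$, so that $\sum_\beta(\mathscr{L}x_\beta)\nabla x_\beta=\tr(\nabla T)-T(\nabla\eta)$. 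I would also record the two identities that feed the summation: $\int_\Omega\langle\mathbf{q}_{\beta i},\mathbf{u}_j\rangle\,dm=(\sigma_i-\sigma_j)a_{\beta ij}$ (from self-adjointness), and the summed double-commutator $\sum_\beta\langle[x_\beta,[\,\cdot\,,x_\beta]]\mathbf{u}_i,\mathbf{u}_i\rangle$, which collapses to $2\int_\Omega(\tr T+\alpha)|\mathbf{u}_i|^2\,dm$. Finally, testing $P\mathbf{u}_i=\sigma_i\mathbf{u}_i$ against $\mathbf{u}_i$ and integrating by parts gives the energy identity $\int_\Omega\sum_{a}\langle T(\nabla u_i^a),\nabla u_i^a\rangle\,dm=\sigma_i-\alpha\|\dv_\eta\mathbf{u}_i\|^2$, which is exactly the quantity appearing under the square root in the statement.

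The third step is the Yang-type bookkeeping. Multiplying the basic inequality by $(\sigma_{k+1}-\sigma_i)$, applying Cauchy--Schwarz (with a free parameter) to $-\int_\Omega\langle\mathbf{q}_{\beta i},\Phi_{\beta i}\rangle\,dm$ after projecting $\mathbf{q}_{\beta i}$ off $\mathrm{span}\{\mathbf{u}_1,\dots,\mathbf{u}_k\}$, and then summing over $i$ and $\beta$, the terms carrying $a_{\beta ij}$ organize into $\sum_{i,j}(\sigma_{k+1}-\sigma_i)(\sigma_i-\sigma_j)a_{\beta ij}^2$, which is $\le0$ by the symmetry $a_{\beta ij}=a_{\beta ji}$ and can be discarded. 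Invoking the double-commutator identity and bounding $\tr T\ge n\varepsilon$ on the left produces the clean factor $n\varepsilon$, leaving an inequality of the shape $(n\varepsilon)\sum_i(\sigma_{k+1}-\sigma_i)^2\lesssim\sum_i(\sigma_{k+1}-\sigma_i)\sum_\beta\|\mathbf{q}_{\beta i}\|^2$.

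The crux, and the step I expect to be the main obstacle, is the estimate of $\sum_\beta\|\mathbf{q}_{\beta i}\|^2$ and its reduction to the bracketed quantity. I would split $\mathbf{q}_{\beta i}$ into its gradient part $2\,T(\nabla x_\beta)\!\cdot\!\nabla\mathbf{u}_i+\alpha\nabla u_i^\beta+\alpha(\dv_\eta\mathbf{u}_i)\nabla x_\beta$ and the zeroth-order part $(\mathscr{L}x_\beta)\mathbf{u}_i$. Summing the gradient part over $\beta$ and estimating through $\|T(\nabla u_i^a)\|\le\delta\|\nabla u_i^a\|$ and $\varepsilon\|\nabla u_i^a\|^2\le\langle T(\nabla u_i^a),\nabla u_i^a\rangle$ is what generates the anisotropy ratio $\delta/\varepsilon$ and, together with the divergence contributions, the factor $(n\delta+\alpha)$. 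The delicate part will be the cross terms: the product of the gradient part with $(\mathscr{L}x_\beta)\mathbf{u}_i$ must be integrated by parts so that the derivatives of $T$ and $\eta$ assemble \emph{precisely} into $\tfrac12\dv(T^2(\nabla\eta))-\tfrac14|T(\nabla\eta)|^2$ plus the error $\tfrac{\delta}{2}T_0\eta_0$, i.e.\ into $C_0$, while the cross term between $2\,T(\nabla x_\beta)\!\cdot\!\nabla\mathbf{u}_i$ and $(\mathscr{L}x_\beta)\mathbf{u}_i$ yields the linear contribution in $\sqrt{E_i}$ (with $E_i=\sigma_i-\alpha\|\dv_\eta\mathbf{u}_i\|^2$) that is absorbed upon completing the square $\big[(\sigma_i-\alpha\|\dv_\eta\mathbf{u}_i\|^2)^{1/2}+\tfrac{T_0}{2\sqrt\delta}\big]^2$. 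Carrying the constants $\varepsilon,\delta$ through these integrations by parts without degrading the coefficient $\tfrac{4\delta(n\delta+\alpha)}{n^2\varepsilon^2}$ is the technical heart of the argument; once the square is completed, collecting the left-hand factor $n\varepsilon$, the gradient factor $\delta/\varepsilon$, and the factor $(n\delta+\alpha)$ delivers the stated inequality.
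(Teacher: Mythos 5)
Your overall strategy coincides with the paper's: trial functions $x_\beta\mathbf{u}_i$ projected off $\mathrm{span}\{\mathbf{u}_1,\dots,\mathbf{u}_k\}$, the Rayleigh--Ritz inequality, the commutator $[\mathscr{L}+\alpha\nabla\dv_\eta,x_\beta]$ (your formula for $\mathbf{q}_{\beta i}$ is correct), Yang's bookkeeping with a free parameter, and then an estimate of the resulting norm. But what you have written is an outline, not a proof: the step you yourself call ``the technical heart'' --- showing that the norm reduces \emph{precisely} to $\big[(\sigma_i-\alpha\|\dv_\eta\mathbf{u}_i\|^2)^{1/2}+\tfrac{T_0}{2\sqrt{\delta}}\big]^2+\tfrac{C_0}{\delta}$ --- is deferred with ``I would'' and ``must be integrated by parts so that''. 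Concretely, you never establish $\sum_\beta(\mathscr{L}x_\beta)^2=|\tr(\nabla T)-T(\nabla\eta)|^2$, nor $\sum_\beta(\mathscr{L}x_\beta)\,T(\nabla x_\beta,\nabla\mathbf{u}_i)=T(\tr(\nabla T),\nabla\mathbf{u}_i)-T(T(\nabla\eta),\nabla\mathbf{u}_i)$, nor the integration by parts $-\int_\Omega\mathbf{u}_i\cdot T(T(\nabla\eta),\nabla\mathbf{u}_i)\,dm=\tfrac12\int_\Omega|\mathbf{u}_i|^2\dv_\eta(T^2(\nabla\eta))\,dm$, which is exactly where the constant $C_0$ comes from. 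Without these computations the theorem is not proved.

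Beyond the incompleteness, two of the structural choices you sketch would not deliver the stated constant even if carried out. First, your schematic inequality $(n\varepsilon)\sum_i(\sigma_{k+1}-\sigma_i)^2\lesssim\sum_i(\sigma_{k+1}-\sigma_i)\sum_\beta\|\mathbf{q}_{\beta i}\|^2$ puts the \emph{full} commutator norm on the right. Since $\mathbf{q}_{\beta i}$ contains $\alpha\nabla u_i^\beta+\alpha(\dv_\eta\mathbf{u}_i)\nabla x_\beta$, this produces terms of order $\alpha^2$ (e.g.\ $\alpha^2\int_\Omega|\nabla\mathbf{u}_i|^2dm$ and $\alpha^2 n\|\dv_\eta\mathbf{u}_i\|^2$) that appear nowhere in the theorem and cannot be absorbed into the bracket. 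In the paper's argument only the $\mathscr{L}$-part $T(\nabla x_\beta,\nabla\mathbf{u}_i)+\tfrac12\mathscr{L}x_\beta\,\mathbf{u}_i$ (projected off the span) enters the Cauchy--Schwarz norm; the $\alpha$-part of the commutator is integrated by parts against $\Phi_{\beta i}$, yielding $-\alpha\int_\Omega|\nabla x_\beta\cdot\mathbf{u}_i|^2dm$ on the \emph{left}-hand side, so that with $\tr T\le n\delta$ the left factor becomes $n\varepsilon-(n\delta+\alpha)B$ and the optimization in $B$ produces $\tfrac{4(n\delta+\alpha)}{n^2\varepsilon^2}$; that is where $(n\delta+\alpha)$ comes from, not from the norm estimate as you assert. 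Second, your chain $\|T(\nabla u_i^a)\|\le\delta\|\nabla u_i^a\|$ combined with $\varepsilon|\nabla u_i^a|^2\le\langle T(\nabla u_i^a),\nabla u_i^a\rangle$ gives only $\|T(\nabla\mathbf{u}_i)\|^2\le\tfrac{\delta^2}{\varepsilon}\,(\sigma_i-\alpha\|\dv_\eta\mathbf{u}_i\|^2)$, which degrades the bracket by a factor $\delta/\varepsilon$. The theorem requires $\|T(\nabla\mathbf{u}_i)\|^2\le\delta\,(\sigma_i-\alpha\|\dv_\eta\mathbf{u}_i\|^2)$, and for that you must use the sharper pointwise inequality $|T(X)|^2\le\delta\langle T(X),X\rangle$, valid because $T$ is symmetric with $0<T\le\delta I$.
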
 

\begin{remark} Notice that the constant $C_0$ in Eq.~\eqref{C_0} has been appropriately defined such that $\Big[(\sigma_i - \alpha \|\dv_\eta{\bf u}_i\|^2)^{\frac{1}{2}}  + \frac{T_0}{2\sqrt{\delta}} \Big]^2 + \frac{C_0}{\delta}>0$, for $i=1,\dots,k$.
\end{remark}

We identify the quadratic estimate in Theorem~\ref{theorem1.1} as the most appropriate inequality for the applications of our results. In particular, the constant $C_0$ in \eqref{C_0} has a crucial importance for us.

Theorem~\ref{theorem1.1} is an extension for $\mathscr{L}+\alpha\nabla\mbox{div}_\eta$ on vector-valued functions of the well-known Yang's estimate of the eigenvalues of the Laplacian on real-valued functions. Its proof is motivated by the corresponding results for the Laplacian on real-valued functions case by Yang~\cite{Yang}, for $\Delta+\alpha\nabla \dv$ on vector-valued functions case by Chen et al.~\cite[Theorem~1.1]{CCWX}, and for $\mathscr{L}$ on real-valued functions case by Gomes and Miranda~\cite{GomesMiranda}. 

We also prove an estimate for the sum of lower order eigenvalues in terms of the first eigenvalue and its correspondent eigenfunction. 

\begin{theorem}\label{theorem1.2}
Let $\Omega \subset \mathbb{R}^n$ be a bounded domain, $\sigma_i$ be the $i$-th eigenvalue of Problem~\eqref{problem1}, for $i=1,\ldots,n$, and ${\bf u}_1$ be a normalized eigenfunction corresponding to the first eigenvalue. Then, we get
\begin{equation*}
    \sum_{i=1}^n(\sigma_{i+1} - \sigma_1) \leq \frac{4\delta(\delta+\alpha)}{\varepsilon^2}\Big\{\Big[(\sigma_1 - \alpha \|\dv_\eta{\bf u}_1\|^2)^{\frac{1}{2}}  + \frac{T_0 }{2\sqrt{\delta}}\Big]^2 + \frac{C_0}{\delta}\Big\}.
\end{equation*}
\end{theorem}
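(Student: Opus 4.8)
The plan is to combine the Ky Fan minimum principle for sums of eigenvalues with a family of $n$ trial functions built from the first eigenfunction and the Euclidean coordinate functions, and then to recycle the commutator computation and the completion of squares that already underlie Theorem~\ref{theorem1.1}.

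First I would fix the variational framework. Writing $A:=\mathscr{L}+\alpha\nabla\dv_\eta$, which is formally self-adjoint with $-A$ non-negative on $\mathbb{L}^2(\Omega,dm)$, Problem~\eqref{problem1} reads $A{\bf u}_i=-\sigma_i{\bf u}_i$, so $-\langle A{\bf u}_1,{\bf u}_1\rangle=\sigma_1$. For each coordinate $\beta=1,\dots,n$ I set $\phi_\beta:=(x_\beta-b_\beta){\bf u}_1$, with $b_\beta:=\int_\Omega x_\beta|{\bf u}_1|^2\,dm$ chosen so that $\langle\phi_\beta,{\bf u}_1\rangle=0$. The Gram matrix $G_{\alpha\beta}=\langle\phi_\alpha,\phi_\beta\rangle$ is symmetric; replacing $(\phi_\beta)$ by the combinations $\tilde\phi_\beta=\sum_\gamma q_{\beta\gamma}\phi_\gamma$ coming from an orthogonal matrix $Q$ that diagonalizes $G$, I obtain $n$ pairwise orthogonal functions, each orthogonal to ${\bf u}_1$ and spanning the same subspace. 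This uses only linear combinations of trial functions, never a rotation of $\mathbb{R}^n$, so it does not disturb $T$ or $\eta$. Then $\{{\bf u}_1,\tilde\phi_1,\dots,\tilde\phi_n\}$ are mutually orthogonal, and evaluating the quadratic form $-\langle A\cdot,\cdot\rangle$ on the associated orthonormal basis together with the minimum principle yields
\[
\sum_{i=1}^n(\sigma_{i+1}-\sigma_1)=\sum_{i=2}^{n+1}\sigma_i-n\sigma_1\leq\sum_{\beta=1}^n\Big(\frac{-\langle A\tilde\phi_\beta,\tilde\phi_\beta\rangle}{\|\tilde\phi_\beta\|^2}-\sigma_1\Big).
\]

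Next I would reduce each summand to a commutator. Since $A{\bf u}_1=-\sigma_1{\bf u}_1$ and each $b_\beta$ is constant, one has $A\tilde\phi_\beta=-\sigma_1\tilde\phi_\beta+\sum_\gamma q_{\beta\gamma}[A,x_\gamma]{\bf u}_1$, so that the $\beta$-th summand equals $-\langle\sum_\gamma q_{\beta\gamma}[A,x_\gamma]{\bf u}_1,\tilde\phi_\beta\rangle/\|\tilde\phi_\beta\|^2$. A direct computation, as in the proof of Theorem~\ref{theorem1.1}, gives componentwise $\mathscr{L}(x_\gamma u)=x_\gamma\mathscr{L}u+2\langle T(\nabla x_\gamma),\nabla u\rangle+u\,\dv_\eta(T(\nabla x_\gamma))$, together with the analogous identity for $\alpha\nabla\dv_\eta$, showing that $[A,x_\gamma]{\bf u}_1$ is a first-order expression in ${\bf u}_1$ whose principal part is governed by $T(\nabla x_\gamma)$ and the coupling constant $\alpha$.

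Finally I would assemble the estimate. Summing over $\beta$, the orthogonality $\sum_\beta q_{\beta\gamma}q_{\beta\gamma'}=\delta_{\gamma\gamma'}$ collapses the $Q$-weights, and Cauchy–Schwarz against $\tilde\phi_\beta$ combined with the pinching $\varepsilon I\leq T\leq\delta I$ and the identity $\sum_\gamma|T(\nabla x_\gamma)|^2=\tr(T^2)\leq n\delta^2$ reduces everything to integrals of $|T(\nabla{\bf u}_1)|^2$, of $\dv_\eta{\bf u}_1$, and of the zeroth-order terms $\tfrac12\dv(T^2(\nabla\eta))-\tfrac14|T(\nabla\eta)|^2$ and $T_0\eta_0$. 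Completing the square in $T_0$ exactly as in Theorem~\ref{theorem1.1} produces the bracket $\big[(\sigma_1-\alpha\|\dv_\eta{\bf u}_1\|^2)^{1/2}+\tfrac{T_0}{2\sqrt\delta}\big]^2+\tfrac{C_0}{\delta}$ with $C_0$ as in \eqref{C_0}, and collecting the $\delta,\varepsilon,\alpha$ factors yields the constant $\tfrac{4\delta(\delta+\alpha)}{\varepsilon^2}$. The main obstacle is the bookkeeping of the coupling term $\alpha\nabla\dv_\eta$ and of the non-constant $T,\eta$ through the commutator and the several integrations by parts, and arranging the Cauchy–Schwarz steps so that the zeroth-order remainder assembles precisely into $C_0$ rather than a larger constant; equally essential is that the orthogonalization via $Q$ be carried out without rotating $\mathbb{R}^n$, since $T$ and $\eta$ need not be rotationally symmetric.
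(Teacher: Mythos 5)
Your proposal is correct in substance, but it takes a genuinely different variational route from the paper's. The paper (following Cheng--Yang and Chen et al.) applies Gram--Schmidt to the matrix $d_{ij}=\int_\Omega x_i\,{\bf u}_1\cdot{\bf u}_{j+1}\,dm$, producing rotated coordinates $y_i$ for which ${\bf w}_i=(y_i-a_i){\bf u}_1$ is orthogonal to ${\bf u}_1,{\bf u}_2,\dots,{\bf u}_i$; each ${\bf w}_i$ is then tested against its own eigenvalue $\sigma_{i+1}$ via the ordinary Rayleigh-quotient characterization, and the unknown norms $\|{\bf w}_i\|^2$ are eliminated by a Cauchy--Schwarz step with a free parameter $B$ that is optimized after summing over $i$. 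You never touch ${\bf u}_2,\dots,{\bf u}_{n+1}$: you orthogonalize the trial functions among themselves and invoke the Ky Fan trace inequality for the orthonormal family $\{{\bf u}_1,\tilde\phi_1,\dots,\tilde\phi_n\}$. This is a stronger variational input than the simple Rayleigh quotient, and it buys a cleaner orthogonalization (no reference to higher eigenfunctions); the paper's route uses only the elementary min--max characterization but pays with the QR decomposition and the $B$-optimization. Both routes then share the same commutator computation and the same final square completion, and both need the frame-invariance of the summed quantities --- so your closing worry is moot: your $\nabla\tilde y_\beta$ do form a rotated orthonormal frame, exactly like the paper's $y_i$, and this is harmless because $\sum_\beta T(\nabla\tilde y_\beta,\nabla\tilde y_\beta)=\tr T$ and $\sum_\beta\big\|T(\nabla\tilde y_\beta,\nabla{\bf u}_1)+\frac12(\mathscr{L}\tilde y_\beta){\bf u}_1\big\|^2$ are invariant under orthogonal changes of frame.

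One step in your assembly should be made explicit, because as written it is the only place the argument could be read as failing: orthogonality of $Q$ collapses sums of \emph{numerators}, not sums of \emph{ratios}, and your Ky Fan summands carry different denominators $\|\tilde\phi_\beta\|^2$. These must be removed before summing over $\beta$. Concretely, set $N'_\beta=\int_\Omega|{\bf u}_1|^2T(\nabla\tilde y_\beta,\nabla\tilde y_\beta)\,dm$, $N_\beta=N'_\beta+\alpha\int_\Omega|\nabla\tilde y_\beta\cdot{\bf u}_1|^2\,dm$ and $M_\beta=\big\|T(\nabla\tilde y_\beta,\nabla{\bf u}_1)+\frac12(\mathscr{L}\tilde y_\beta){\bf u}_1\big\|^2$; the $\beta$-th summand equals $N_\beta/\|\tilde\phi_\beta\|^2$. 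Integration by parts gives $N'_\beta=-2\int_\Omega\tilde\phi_\beta\cdot\big(T(\nabla\tilde y_\beta,\nabla{\bf u}_1)+\frac12(\mathscr{L}\tilde y_\beta){\bf u}_1\big)\,dm$, so Cauchy--Schwarz yields $(N'_\beta)^2\le 4\|\tilde\phi_\beta\|^2 M_\beta$; since $N'_\beta\ge\varepsilon$, $N_\beta\le N'_\beta+\alpha$ (because $\|{\bf u}_1\|=1$ and $|\nabla\tilde y_\beta|=1$) and $t\mapsto(t+\alpha)/t^2$ is decreasing, one gets $N_\beta/\|\tilde\phi_\beta\|^2\le 4M_\beta(\varepsilon+\alpha)/\varepsilon^2\le 4M_\beta(\delta+\alpha)/\varepsilon^2$. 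Only now does summing over $\beta$, the bound $\sum_\beta M_\beta\le\big(\|T(\nabla{\bf u}_1)\|+\frac12T_0\big)^2+C_0$, and the inequality $\|T(\nabla{\bf u}_1)\|^2\le\delta\big(\sigma_1-\alpha\|\dv_\eta{\bf u}_1\|^2\big)$ produce the stated constant $\frac{4\delta(\delta+\alpha)}{\varepsilon^2}$. This per-$\beta$ ratio bound is the exact counterpart of the paper's parameter-$B$ optimization; with it, your argument is complete.
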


Theorem~\ref{theorem1.2} is an extension for $\mathscr{L}+\alpha\nabla\mbox{div}_\eta$ on vector-valued functions of a stronger result obtained by Cheng and Yang~\cite[Theorem~1.2]{ChengYang} for lower order eigenvalues of Problem~\eqref{problem3}. Its proof is motivated by the corresponding results  for $\Delta+\alpha\nabla \dv$ case in \cite{ChengYang} and for $\Delta_\eta+\alpha\nabla \dv_\eta$ case in~\cite[Theorem~1.3]{DuBezerra}.

\section{Applications}\label{section}
We begin this section by defining a known class of the functions which is closely related to our applications. A nonconstant smooth function $f: \mathbb{R}^n \to \mathbb{R}$ is called transnormal function if
\begin{equation}\label{Eq-transnormal}
    |\nabla f|^2 = b(f),
\end{equation}
for some smooth function $b$ on the range of $f$ in $\mathbb{R}$. The function $f$ is called an isoparametric function if it moreover satisfies
\begin{equation}\label{Eq-isoparametric}
    \Delta f = a(f),
\end{equation}
for some continuous function  $a$ on the range of $f$ in $\mathbb{R}$. 

Eq.~\eqref{Eq-transnormal} implies that the level set hypersurfaces of $f$ are parallel hypersurfaces and it follows from Eq.~\eqref{Eq-isoparametric} that these hypersurfaces have constant mean curvature. Isoparametric functions appear in the isoparametric hypersurfaces theory (i.e., has constant principal curvatures) systematically developed by Cartan~\cite{Cartan} on space forms. Wang~\cite{Wang} considered the problem of extending this theory to a general Riemannian manifold and studied some properties of \eqref{Eq-transnormal} and \eqref{Eq-isoparametric} more closely. Notice that isoparametric functions exist on a large class of spaces (e.g. symmetric spaces) other than space forms. Currently, new examples of isoparametric functions on Riemannian manifolds have been discovered, for instance, the potential function of any noncompact gradient Ricci soliton $(M^n,g,\eta)$ with constant scalar curvature $R$ is an isoparametric function, since we can assume that $\eta$ (after a possible rescaling) satisfies $|\nabla \eta|^2=2\lambda\eta-R$ and $\Delta\eta=\lambda n-R$, see, e.g., Chow et al.~\cite{B.Chow}. In particular, the potential function of the Gaussian shrinking soliton $(\mathbb{R}^n,\delta_{ij},\frac{\lambda}{2}|x|^2)$ is an isoparametric function, see Example~\ref{example1}. This latter fact and a brief analysis of the constant $C_0$ in \eqref{C_0} were the main motivations to consider the isoparametric function $\eta(x)=\frac{\lambda}{2}|x|^2$ to give some applications of our results. The quadratic estimate below is a basic result for it.

\begin{cor}\label{theorem3.1}
Let $\Omega\subset\mathbb{R}^n$ be a bounded domain and ${\bf u}_i$ be a normalized eigenfunction corresponding to $i$-th eigenvalue $\sigma_i$ of Problem~\eqref{problem4}. Then, for any positive integer $k$, we get
\begin{align}\label{eq3.2}
    \sum_{i=1}^k(\sigma_{k+1}-\sigma_i)^2 &\leq \frac{4( n+\alpha)}{n^2}\sum_{i=1}^k(\sigma_{k+1}-\sigma_i)( \sigma_i -\alpha\|\dv_\eta {\bf u}_i\|^2 + C_0),
\end{align} 
where $C_0 = \sup_{\Omega}\big\{ \frac{1}{2} \Delta \eta - \frac{1}{4}|\nabla \eta |^2\big\}$. Moreover, $\sigma_i -\alpha\|\dv_\eta {\bf u}_i\|^2 + C_0>0$, for $i=1,\ldots,k$.
\end{cor}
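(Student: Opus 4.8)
The plan is to recognize that Problem~\eqref{problem4} is precisely the special case of Problem~\eqref{problem1} obtained by setting $T=I$, and therefore to derive Corollary~\ref{theorem3.1} by specializing the universal quadratic estimate of Theorem~\ref{theorem1.1} and simplifying every constant appearing there. First I would record the effect of the choice $T=I$ on the structural constants $\varepsilon,\delta,T_0$ and $C_0$, and only afterwards perform the substitution into the inequality.

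Since $I\le T\le I$ holds trivially, we may take $\varepsilon=\delta=1$. The crucial observation is that the identity $(1,1)$-tensor is parallel, i.e. $\nabla I=0$ as a $(1,2)$-tensor, whence $\tr(\nabla T)=\tr(\nabla I)=0$ and consequently $T_0=\sup_\Omega|\tr(\nabla T)|=0$. This in turn annihilates the additive term $\frac{\delta}{2}T_0\eta_0$ in \eqref{C_0}. Next, because $T^2=I$, one has $T^2(\nabla\eta)=\nabla\eta$ and $|T(\nabla\eta)|^2=|\nabla\eta|^2$, so that $\dv(T^2(\nabla\eta))=\dv(\nabla\eta)=\Delta\eta$. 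Substituting these identities into \eqref{C_0} collapses $C_0$ to $\sup_\Omega\{\tfrac12\Delta\eta-\tfrac14|\nabla\eta|^2\}$, which is exactly the constant stated in the corollary.

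With these simplifications in hand, I would substitute $\varepsilon=\delta=1$ and $T_0=0$ directly into the inequality of Theorem~\ref{theorem1.1}. The leading coefficient becomes $\frac{4\delta(n\delta+\alpha)}{n^2\varepsilon^2}=\frac{4(n+\alpha)}{n^2}$, while the bracketed expression reduces, since $\frac{T_0}{2\sqrt{\delta}}=0$ and $\frac{C_0}{\delta}=C_0$, to $(\sigma_i-\alpha\|\dv_\eta{\bf u}_i\|^2)+C_0$. This yields \eqref{eq3.2} verbatim. Finally, the positivity claim $\sigma_i-\alpha\|\dv_\eta{\bf u}_i\|^2+C_0>0$ is nothing but the remark following Theorem~\ref{theorem1.1} read under the same specialization $T=I$, in which the positive quantity $\big[(\sigma_i-\alpha\|\dv_\eta{\bf u}_i\|^2)^{1/2}+\frac{T_0}{2\sqrt{\delta}}\big]^2+\frac{C_0}{\delta}$ degenerates to $\sigma_i-\alpha\|\dv_\eta{\bf u}_i\|^2+C_0$.

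The argument is essentially a bookkeeping specialization rather than a new estimate, so I do not expect a genuine obstacle; the only point requiring a moment's care is the vanishing of $T_0$, which rests on the fact that the identity tensor is parallel, so that its covariant derivative, and hence the trace thereof, is identically zero. Everything else is a routine substitution and cancellation.
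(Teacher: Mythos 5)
Your proposal is correct and follows exactly the paper's own route: the paper likewise proves Corollary~\ref{theorem3.1} by specializing Theorem~\ref{theorem1.1} with $T=I$, noting $\varepsilon=\delta=1$ and $T_0=0$ so that $C_0$ collapses to $\sup_{\Omega}\{\tfrac{1}{2}\Delta\eta-\tfrac{1}{4}|\nabla\eta|^2\}$. Your write-up is merely more explicit about the bookkeeping (the vanishing of $\tr(\nabla I)$ and the simplification of $T^2(\nabla\eta)$), which the paper leaves implicit.
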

\begin{proof}
In Problem~\eqref{problem4} we must have  $T = I$. Then, we get $\varepsilon=\delta =1$ and  $T_0 = 0$. Hence, the result of the corollary follows from Theorem~\ref{theorem1.1}. 
\end{proof}

The following corollary is an immediate consequence of Theorem~\ref{theorem1.2}.
\begin{cor}\label{new-corollary}
Let $\Omega\subset\mathbb{R}^n$ be a  bounded domain, $\sigma_i$ be the $i$-th eigenvalue of Problem~\eqref{problem4}, for $i=1,\ldots,n$, and ${\bf u}_1$ be a normalized eigenfunction corresponding to the first eigenvalue. Then, we get
\begin{align}\label{equation5.4}
    \sum_{i=1}^n(\sigma_{i+1} - \sigma_1) \leq 4(1+\alpha)( \sigma_1 + D_1),
\end{align}
where $D_1 = -\alpha \|\dv_\eta {\bf u}_1\|^2+ C_0$ and $C_0 = \sup_{\Omega}\Big\{ \frac{1}{2} \Delta \eta - \frac{1}{4}|\nabla \eta |^2\Big\}$.
\end{cor}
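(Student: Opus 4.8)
The plan is to obtain this corollary by directly specializing Theorem~\ref{theorem1.2} to the setting of Problem~\eqref{problem4}, exactly as was done for the quadratic estimate in Corollary~\ref{theorem3.1}. Since Problem~\eqref{problem4} is the instance of Problem~\eqref{problem1} in which $T = I$, the first step is to record the values taken by the structural constants in this case: from $\varepsilon I \leq T \leq \delta I$ with $T = I$ one reads off $\varepsilon = \delta = 1$, and since $\nabla I = 0$ one has $T_0 = \sup_\Omega |\tr(\nabla T)| = 0$.

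Next I would substitute these values into the definition~\eqref{C_0} of the general constant $C_0$. Using $T = I$, the tensor terms collapse: $\dv(T^2(\nabla\eta)) = \dv(\nabla\eta) = \Delta\eta$ and $|T(\nabla\eta)|^2 = |\nabla\eta|^2$, while the remaining summand $\frac{\delta}{2}T_0\eta_0$ vanishes because $T_0 = 0$. This reduces $C_0$ to $\sup_\Omega\{\frac{1}{2}\Delta\eta - \frac{1}{4}|\nabla\eta|^2\}$, which is precisely the constant $C_0$ appearing in the statement.

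Finally I would insert $\varepsilon = \delta = 1$ and $T_0 = 0$ into the inequality of Theorem~\ref{theorem1.2}. The prefactor $\frac{4\delta(\delta+\alpha)}{\varepsilon^2}$ becomes $4(1+\alpha)$; the bracketed square $\big[(\sigma_1 - \alpha\|\dv_\eta{\bf u}_1\|^2)^{1/2} + \frac{T_0}{2\sqrt{\delta}}\big]^2$ reduces to $\sigma_1 - \alpha\|\dv_\eta{\bf u}_1\|^2$ because the $T_0$ term disappears; and $\frac{C_0}{\delta}$ becomes $C_0$. Collecting these, the right-hand side equals $4(1+\alpha)\big(\sigma_1 - \alpha\|\dv_\eta{\bf u}_1\|^2 + C_0\big) = 4(1+\alpha)(\sigma_1 + D_1)$ with $D_1 = -\alpha\|\dv_\eta{\bf u}_1\|^2 + C_0$, which is the desired bound.

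Since all the analytic content is already carried by Theorem~\ref{theorem1.2}, there is no genuine obstacle here; the proof is pure bookkeeping of constants. The only point that deserves a moment's care is the vanishing of $T_0$ and the consequent disappearance of both the $\frac{\delta}{2}T_0\eta_0$ summand in $C_0$ and the $\frac{T_0}{2\sqrt{\delta}}$ correction inside the square — it is precisely these simplifications that turn the somewhat intricate general estimate into the clean linear bound stated above.
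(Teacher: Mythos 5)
Your proposal is correct and is exactly the paper's route: the paper states this corollary as an immediate consequence of Theorem~\ref{theorem1.2}, obtained by the same specialization $T=I$, hence $\varepsilon=\delta=1$ and $T_0=0$, that was used to derive Corollary~\ref{theorem3.1} from Theorem~\ref{theorem1.1}. Your bookkeeping of the constants (the prefactor becoming $4(1+\alpha)$, the $T_0$ terms vanishing, and $C_0$ reducing to $\sup_{\Omega}\{\tfrac{1}{2}\Delta\eta-\tfrac{1}{4}|\nabla\eta|^2\}$) is accurate and complete.
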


Notice that the appearance of the constant $C_0$ is natural, since we did not impose any restriction on the function $\eta$. We highlight that this constant has an unexpected geometric interpretation. Indeed, let us consider the warped metric $g = g_0 + e^{-\eta}d\theta^2$ on the product $\Omega\times \mathbb{S}^1$, where $g_0$ stands for the canonical metric in the domain $\Omega\subset\mathbb{R}^n$, whereas $d\theta^2$ is the canonical metric of the unit sphere $\mathbb{S}^1$, so that the scalar curvature of $g$ is given by $\frac{1}{2} \Delta \eta - \frac{1}{4}|\nabla \eta |^2$. Hence, we can obtain $C_0$ as supremum of the scalar curvature on the warped product $\Omega\times_{e^{-\eta}}\mathbb{S}^1$. Moreover, we ask the following natural question:
\begin{question}
Under which conditions the inequalities for the eigenvalues obtained from~\eqref{eq3.2} and \eqref{equation5.4} do not depend on the constant $C_0$ for a nontrivial function $\eta$?
\end{question}

We give an answer to this question by using a specific family of domains in Gaussian shrinking soliton. More precisely, we consider a countable family of bounded domains in $\mathbb{R}^n$ that makes the behavior of known estimates of eigenvalues of the Laplacian invariant by a first-order perturbation of the Laplacian, see  Corollary~\ref{corollary2.4}. 

Coming back to Corollary~\ref{theorem3.1}, we define
\begin{equation}\label{Do and D1}
D_0=-\alpha \min_{j=1, \ldots, k}\|\dv_\eta {\bf u}_j\|^2+C_0,
\end{equation}
so that, from~\eqref{eq3.2}, we get
\begin{align}\label{eq2.5} 
    \sum_{i=1}^k(\sigma_{k+1}-\sigma_i)^2 &\leq \frac{4( n+\alpha)}{n^2}\sum_{i=1}^k(\sigma_{k+1}-\sigma_i)( \sigma_i + D_0).
\end{align}
Notice that $\sigma_i + D_0>0.$

Now, as mentioned in the introduction, we immediately recover the following inequality:
\begin{align}\label{Ineq-CCWX}
   \sum_{i=1}^k(\sigma_{k+1}-\sigma_i)^2 \leq \frac{4(n+\alpha)}{ n^2}\sum_{i=1}^k(\sigma_{k+1}-\sigma_i)\sigma_i,
  \end{align}
which has been obtained by Chen et al.~\cite[Corollary~1.2]{CCWX} for Problem~\eqref{problem3}. Indeed, it follows from \eqref{Do and D1} and \eqref{eq2.5}, since $\alpha\geq0$ and we can take $\eta$ to be a constant. Moreover, Inequality~\eqref{Ineq-CCWX} implies Theorem~1.1 in Cheng and Yang~\cite{ChengYang}, whereas \cite[Theorem~1.1]{ChengYang} implies Theorem~10 in Hook~\cite{Hook}.  However, we highlight that Inequality~\eqref{eq2.5} provides an estimate for the eigenvalues of Problem~\eqref{problem3} which is better than Inequality~\eqref{Ineq-CCWX}. 

In the case of Problem~\eqref{problem4}, we can see that Inequality~\eqref{equation5.4} is better than Inequality~(1.7) in Du and Bezerra~\cite{DuBezerra}; whereas Inequality~\eqref{eq2.5} is better than Inequality~(1.3) again in \cite{DuBezerra}.

Besides, from Inequality~\eqref{eq2.5} and following the steps of the proof of \cite[Theorem~3]{GomesMiranda}, we obtain the inequalities:
\begin{cor}\label{cor_sharpgap}
Under the same setup as in Corollary~\ref{theorem3.1}, we have
\begin{align}\label{new-sharpgap}
    \sigma_{k+1} + D_0 \leq& \Big(1+\frac{2(n+\alpha)}{n^2}\Big)\frac{1}{k}\sum_{i=1}^k(\sigma_i + D_0) + \Big[\Big( \frac{2(n+\alpha)}{n^2}\frac{1}{k}\sum_{i=1}^k(\sigma_i + D_0)\Big)^2 \nonumber \\
   &-\Big(1+\frac{4(n+\alpha)}{n^2}\Big) \frac{1}{k}\sum_{j=1}^k\Big(\sigma_j-\frac{1}{k}\sum_{i=1}^k\sigma_i\Big)^2 \Big]^{\frac{1}{2}}
\end{align}
and
\begin{align}\label{new-sharpgap2}
\nonumber\sigma_{k+1} -\sigma_k \leq& 2\Big[\Big( \frac{2(n+\alpha)}{n^2}\frac{1}{k}\sum_{i=1}^k(\sigma_i + D_0)\Big)^2\\ &-\Big(1+\frac{4(n+\alpha)}{n^2}\Big) \frac{1}{k}\sum_{j=1}^k\Big(\sigma_j-\frac{1}{k}\sum_{i=1}^k\sigma_i\Big)^2 \Big]^{\frac{1}{2}},
\end{align}
where $D_0$ is given by \eqref{Do and D1}.
\end{cor}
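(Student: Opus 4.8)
The plan is to transcribe Inequality~\eqref{eq2.5} into a scalar quadratic inequality in the largest shifted eigenvalue and then read off both estimates from the location of its roots, following~\cite[Theorem~3]{GomesMiranda}. First I would set $\mu_i:=\sigma_i+D_0$ for $i=1,\dots,k+1$; by the positivity noted after Corollary~\ref{theorem3.1} one has $\mu_i>0$, the ordering $\mu_1\le\cdots\le\mu_{k+1}$ is inherited from that of the $\sigma_i$, and, $D_0$ being constant, the gaps are unchanged, $\sigma_{k+1}-\sigma_i=\mu_{k+1}-\mu_i$. Writing $\theta:=\frac{2(n+\alpha)}{n^2}$, Inequality~\eqref{eq2.5} becomes $\sum_{i=1}^k(\mu_{k+1}-\mu_i)^2\le 2\theta\sum_{i=1}^k(\mu_{k+1}-\mu_i)\mu_i$. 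Expanding both sides, collecting the powers of $\mu_{k+1}$, and dividing by $k$ yields
\[
\mu_{k+1}^2-2(1+\theta)M\,\mu_{k+1}+(1+2\theta)\big(M^2+V\big)\le 0,
\]
where $M:=\frac{1}{k}\sum_{i=1}^k\mu_i$ and $V:=\frac{1}{k}\sum_{i=1}^k(\mu_i-M)^2=\frac{1}{k}\sum_{i=1}^k\big(\sigma_i-\frac{1}{k}\sum_{j=1}^k\sigma_j\big)^2$, using $\frac{1}{k}\sum_i\mu_i^2=M^2+V$.

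The left-hand side is an upward parabola in $\mu_{k+1}$ that is nonpositive at $\mu_{k+1}$, so its discriminant is nonnegative and $\mu_{k+1}$ lies between the two roots $t_\pm=(1+\theta)M\pm\sqrt{\Delta}$, where $\Delta=(1+\theta)^2M^2-(1+2\theta)(M^2+V)=\theta^2M^2-(1+2\theta)V$. Since $\theta=\frac{2(n+\alpha)}{n^2}$ and $1+2\theta=1+\frac{4(n+\alpha)}{n^2}$, the quantity $\Delta$ is exactly the bracket appearing in the statement, and $M=\frac{1}{k}\sum_{i=1}^k(\sigma_i+D_0)$. The bound $\mu_{k+1}\le t_+$ then reads, after substituting $\mu_{k+1}=\sigma_{k+1}+D_0$, precisely as Inequality~\eqref{new-sharpgap}; this part is routine once the quadratic is in hand.

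For the gap estimate I would write $\sigma_{k+1}-\sigma_k=\mu_{k+1}-\mu_k$ and combine the upper bound $\mu_{k+1}\le t_+$ with a matching lower bound $\mu_k\ge t_-$; together these give $\mu_{k+1}-\mu_k\le t_+-t_-=2\sqrt{\Delta}$, which is Inequality~\eqref{new-sharpgap2} after undoing the shift. The hard part is establishing $\mu_k\ge t_-$, because, unlike $\mu_{k+1}\le t_+$, it is not a consequence of the level-$k$ quadratic alone: that quadratic confines only $\mu_{k+1}$ to $[t_-,t_+]$, not $\mu_k$. I would split into two cases. When $\mu_k\ge(1+\theta)M$ the inequality $\mu_k\ge t_-$ is immediate since $t_-\le(1+\theta)M$; otherwise $(1+\theta)M-\mu_k>0$, and isolating the square root and squaring turns $\mu_k\ge t_-$ into a polynomial relation in the $\mu_i$ which I expect to collapse, using the ordering $\mu_1\le\cdots\le\mu_k$ together with the lower-order instance of~\eqref{eq2.5}, to a manifestly true inequality. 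Carrying out this last reduction is the delicate computation hidden behind ``following the steps of~\cite[Theorem~3]{GomesMiranda},'' and it is the step I would expect to absorb most of the effort; once $\mu_k\ge t_-$ is in place the gap bound follows at once.
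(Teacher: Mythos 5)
Your proof of \eqref{new-sharpgap} is complete and coincides with the paper's argument: the paper likewise sets $v_i=\sigma_i+D_0$, observes that \eqref{eq2.5} becomes a quadratic inequality in $v_{k+1}$, and solves it following Gomes--Miranda; your expansion, the identity $\frac{1}{k}\sum_i\mu_i^2=M^2+V$, and the identification of the discriminant $\Delta=\theta^2M^2-(1+2\theta)V$ with the bracket in the statement are all correct.

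The gap estimate \eqref{new-sharpgap2} is where your proposal is incomplete. You correctly isolate the crux, namely $\mu_k\ge t_-$, and you are right that it cannot follow from the level-$k$ quadratic plus the ordering alone: for small $\theta$, the configuration $k=2$, $\mu_1=1-\epsilon$, $\mu_2=1+\epsilon$ with $\epsilon=\theta/\sqrt{1+2\theta}$, and $\mu_3=1+\theta$ satisfies $P(\mu_3)\le0$ and $\mu_1\le\mu_2\le\mu_3$, yet $\mu_2<t_-=t_+$ and $\mu_3-\mu_2>0=2\sqrt{\Delta}$. But your plan for proving $\mu_k\ge t_-$ (case split, isolate the square root, square, and hope the resulting polynomial relation ``collapses'') is never executed, so as written the second inequality is not established. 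The missing step is in fact short, and no squaring or case analysis is needed: one shows directly that $P(\mu_k)\le0$. For $k\ge2$, apply \eqref{eq3.2} with $k$ replaced by $k-1$; since $\alpha\ge0$ and $D_0$ is defined via the minimum over $j\le k$, one has $\sigma_i-\alpha\|\dv_\eta{\bf u}_i\|^2+C_0\le\sigma_i+D_0=\mu_i$ and $\sigma_k-\sigma_i\ge0$ for $i\le k-1$, whence $\sum_{i=1}^{k-1}(\mu_k-\mu_i)^2\le2\theta\sum_{i=1}^{k-1}(\mu_k-\mu_i)\mu_i$. Because the $i=k$ summand of $kP(\mu_k)=\sum_{i=1}^{k}\big[(\mu_k-\mu_i)^2-2\theta(\mu_k-\mu_i)\mu_i\big]$ vanishes identically, this is precisely $P(\mu_k)\le0$; for $k=1$ one checks $P(\mu_1)=0$ directly. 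Hence $\mu_k\in[t_-,t_+]$ as well, and $\sigma_{k+1}-\sigma_k=\mu_{k+1}-\mu_k\le t_+-t_-=2\sqrt{\Delta}$, which is \eqref{new-sharpgap2}. Equivalently---and this is what ``following the steps of Gomes--Miranda'' amounts to---the derivation of \eqref{eq2.5} in Lemmas~\ref{lema1}--\ref{lemma2.3} runs verbatim with $\sigma_{k+1}$ replaced by any $z\in[\sigma_k,\sigma_{k+1}]$, since only the Rayleigh--Ritz bound $z\le\sigma_{k+1}$ and the sign conditions $z-\sigma_i\ge0$ are used; taking $z=\sigma_k$ again gives $P(\mu_k)\le0$.
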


\begin{proof}
By setting $v_i := \sigma_i + D_0$ into \eqref{eq2.5} we get
\begin{equation}\label{equation1.13}
    \sum_{i=1}^k(v_{k+1}-v_i)^2 \leq \frac{4(n+\alpha)}{n^2}\sum_{i=1}^k(v_{k+1}-v_i)v_i. 
\end{equation}
From~\eqref{equationn1.3}, $v_1 \leq v_2 \leq \cdots \leq v_{k+1}$, and from~\eqref{eq2.5} each $v_i>0$. Notice that \eqref{equation1.13} is a quadratic inequality of $v_{k+1}$. So, we can follow the steps of the proof of Inequalities~(1.3) and (1.4) in Gomes and Miranda \cite{GomesMiranda} to obtain the required inequalities of the corollary.
\end{proof}

Again from \eqref{eq2.5} and by applying the recursion formula of Cheng and Yang~\cite{ChengYang2}, we obtain the following corollary.
\begin{cor}\label{corollary3.3}
Under the same setup as in Corollary~\ref{theorem3.1}, we have
\begin{equation}\label{equation1.11}
    \sigma_{k+1} + D_0 \leq \Big(1+\frac{4(n+\alpha)}{n^2}\Big)k^{\frac{2( n+\alpha)}{n^2}}(\sigma_1 + D_0),
\end{equation}
where $D_0$ is given by \eqref{Do and D1}.
\end{cor}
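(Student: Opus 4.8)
The plan is to derive Inequality~\eqref{equation1.11} from the quadratic estimate~\eqref{eq2.5} by introducing the shifted eigenvalues $v_i := \sigma_i + D_0$ and appealing to the abstract recursion formula of Cheng and Yang~\cite{ChengYang2}. As already observed in the proof of Corollary~\ref{cor_sharpgap}, substituting $v_i$ into~\eqref{eq2.5} turns it into the clean form
\begin{equation*}
    \sum_{i=1}^k(v_{k+1}-v_i)^2 \leq \frac{4(n+\alpha)}{n^2}\sum_{i=1}^k(v_{k+1}-v_i)v_i,
\end{equation*}
with $0 < v_1 \leq v_2 \leq \cdots \leq v_{k+1}$ guaranteed by~\eqref{equationn1.3} and the positivity $\sigma_i + D_0 > 0$ noted after~\eqref{eq2.5}. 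The whole point of passing to the $v_i$ is that this inequality has exactly the shape required to invoke the Cheng--Yang recursion machinery, which applies to any positive nondecreasing sequence satisfying a quadratic estimate of this type.

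Concretely, I would recall the recursion formula from~\cite{ChengYang2}: if a positive nondecreasing sequence $\{v_i\}$ satisfies $\sum_{i=1}^k(v_{k+1}-v_i)^2 \leq C\sum_{i=1}^k(v_{k+1}-v_i)v_i$ for a constant $C > 0$, then one has the growth bound $v_{k+1} \leq (1+2C)\,k^{C}\,v_1$. Here the relevant constant is $C = \dfrac{2(n+\alpha)}{n^2}$, since the coefficient in our quadratic inequality is $\dfrac{4(n+\alpha)}{n^2} = 2C$. Substituting this value of $C$ into the recursion bound gives
\begin{equation*}
    v_{k+1} \leq \Big(1+\frac{4(n+\alpha)}{n^2}\Big)\,k^{\frac{2(n+\alpha)}{n^2}}\,v_1,
\end{equation*}
and unwinding the definition $v_i = \sigma_i + D_0$ yields precisely~\eqref{equation1.11}. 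The argument is therefore a direct specialization: verify the hypotheses of the recursion (positivity and monotonicity of the $v_i$, which are already in hand), identify the constant $C$, and read off the conclusion.

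I do not expect a genuine obstacle here, since the heavy lifting is done by the cited recursion formula and all its hypotheses have been checked in the course of establishing~\eqref{eq2.5} and Corollary~\ref{cor_sharpgap}. The only point demanding a little care is the bookkeeping of constants—matching the coefficient $\frac{4(n+\alpha)}{n^2}$ against the normalization $2C$ used in~\cite{ChengYang2}, so that both the exponent $k^{2(n+\alpha)/n^2}$ and the prefactor $1+\frac{4(n+\alpha)}{n^2}$ come out correctly. Once that identification is fixed, the result follows immediately without any further estimation.
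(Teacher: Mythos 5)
Your proposal is correct and follows essentially the same route as the paper: substitute $v_i = \sigma_i + D_0$ into \eqref{eq2.5} to obtain \eqref{equation1.13}, note positivity and monotonicity of the $v_i$, and apply the Cheng--Yang recursion formula of \cite{ChengYang2}, which (as the paper also remarks) holds for an arbitrary positive real parameter --- your normalization $2C = \frac{4(n+\alpha)}{n^2}$ is just the paper's choice of parameter $\frac{n^2}{n+\alpha}$ in disguise, and both yield the prefactor $1+\frac{4(n+\alpha)}{n^2}$ and exponent $k^{\frac{2(n+\alpha)}{n^2}}$.
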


\begin{proof}
Notice that the recursion formula by Cheng and Yang~\cite[Corollary~2.1]{ChengYang2} remains true for any positive real number, in particular, it holds for $\frac{n^2}{n+\alpha}$, then we can apply this formula in \eqref{equation1.13} to obtain \eqref{equation1.11}.
\end{proof}

From the classical Weyl’s asymptotic formula for the eigenvalues \cite{Weyl}, we know that estimate \eqref{equation1.11} is optimal in the sense of the order on $k$. 

\begin{remark}\label{remark1}
If $D_0 = 0$, then the inequalities of eigenvalues \eqref{eq2.5} and \eqref{equation1.11} have the same behavior as the known estimates of the eigenvalues of $\Delta + \alpha \nabla \dv$, see Inequality~\eqref{Ineq-CCWX} and Chen et al.~\cite[Corollary~1.4]{CCWX}, respectively. In the same way, the inequalities of eigenvalues  \eqref{new-sharpgap} and \eqref{new-sharpgap2} imply 
\begin{equation*}
    \sigma_{k+1} \leq \Big(1+\frac{4(n + \alpha)}{n^2}\Big)\frac{1}{k}\sum_{i=1}^k\sigma_i \quad \mbox{and} \quad
     \sigma_{k+1} - \sigma_k \leq \frac{4(n + \alpha)}{n^2}\frac{1}{k}\sum_{i=1}^k\sigma_i,
 \end{equation*}
which have the same behavior as the inequalities of the eigenvalue of $\Delta + \alpha \nabla \dv$ obtained by Chen et al.~\cite[Corollary~1.3]{CCWX}.

If $D_1 = 0$, then the inequality of eigenvalues~\eqref{equation5.4} has the same behavior as the known estimate of the eigenvalues of $\Delta + \alpha \nabla \dv$ proved by Cheng and Yang~\cite[Theorem~1.2]{ChengYang}.
\end{remark}

\begin{remark}\label{remark1-1}
For $\alpha=0$ case, if $C_0=0$ for some function $\eta$ (possibly radial or isoparametric), then the inequalities  \eqref{equation5.4}, \eqref{eq2.5}, \eqref{new-sharpgap} and \eqref{equation1.11} have the same behavior as the known estimates of the eigenvalues of the Laplacian, see \cite[Inequality~(6.2)]{AshbaughBenguria}, \cite[Theorem~1]{Yang},   \cite[Inequality~(1.8)]{ChengYang2} and \cite[Corollary~2.1]{ChengYang2}, respectively.
\end{remark}

Example~\ref{example1} below is a special case of $C_0=0$. To see this, let us consider an isoparametric function  $\eta(x)=\frac{\lambda}{2}(x_1^2+\cdots+x_k^2)$ on $\mathbb{R}^n$, where $\lambda$ is any nonzero real number, $k$ an integer with $0<k\leq n$ and $x=(x_1, \ldots, x_n)\in \mathbb{R}^n$. It is easy to verify that
\begin{equation*}
    |\nabla \eta|^2=2\lambda\eta  \:\: \mbox{and} \:\: \Delta \eta = \lambda k.
\end{equation*}
In particular, if $k=n$, the function $\eta(x)=\frac{\lambda}{2}|x|^2$ is the potential function of the Gaussian shrinking ($\lambda>0$) or expanding ($\lambda<0$) soliton on $\mathbb{R}^n$. We now take $\eta(x)=\frac{\lambda}{2}|x|^2$ into the equation of $C_0$ in Corollary~\ref{theorem3.1}, so that,
\begin{equation}\label{C_0-Ex}
    C_0=\sup_{\Omega}\Big\{\frac{\lambda n}{2} - \frac{\lambda^2}{4}|x|^2 \Big\}.
\end{equation}

With these considerations in mind, we write the next two examples.
\begin{example}\label{example1}
Let us consider the family of bounded domains $\{\Omega_l\}_{l=1}^\infty$ in Gaussian shrinking or expanding soliton $(\mathbb{R}^n,\delta_{ij},\frac{\lambda}{2}|x|^2)$ given by
\begin{equation*}\label{domains}
        \Omega_l=\mathbb{B}(r_l)-\bar{\mathbb{B}}(\sqrt{2n/|\lambda|})=\Big\{ x \in \mathbb{R}^n; \frac{2n}{|\lambda|} < |x|^2 < r_l^2 \Big\},
\end{equation*}
where $r_l>\sqrt{2n/|\lambda|}$ is a rational number, and $\mathbb{B}(r)$ stands for the open ball of radius $r$ centered at the origin in $\mathbb{R}^n$. So,
\begin{equation*}
    \min_{\bar{\Omega}_l}|x|^2=\frac{2n}{|\lambda|}, \quad \mbox{for all} \quad l=1, 2, \ldots.
\end{equation*}
\noindent{\bf(a) Shrinking case:} 
\begin{equation*}
    C_0=\frac{\lambda}{2}\sup_{\Omega_l}\Big\{n - \frac{\lambda}{2}|x|^2 \Big\} = \frac{\lambda}{2}\Big(n - \frac{\lambda}{2}\min_{\bar{\Omega}_l}|x|^2\Big)=0, \quad \mbox{for all} \quad l=1, 2, \ldots.
\end{equation*}
\noindent{\bf(b) Expanding case:}
\begin{equation*}
    C_0=\frac{\lambda}{2}\inf_{\Omega_l}\Big\{n - \frac{\lambda}{2}|x|^2 \Big\} = \frac{\lambda}{2}\Big(n - \frac{\lambda}{2}\min_{\bar{\Omega}_l}|x|^2\Big)= \lambda n, \quad \mbox{for all} \quad l=1, 2, \ldots.
\end{equation*}
\end{example}

\begin{example}\label{example1-1}
Let us consider the domain $\Omega$ to be the open ball $\mathbb{B}(r)$ of radius $r$ centered at the origin in Gaussian shrinking or expanding soliton $(\mathbb{R}^n,\delta_{ij},\frac{\lambda}{2}|x|^2)$. From Eq.~\eqref{C_0-Ex}, we easily see that $C_0 = \lambda n/2$ for both shrinking and expanding case.
\end{example}

We are now in the right position to give the interesting applications that we had promised. 

\begin{cor}[{\bf Non-dependence of $\eta$}]\label{corollary2.4}
Let us consider the family of domains $\{\Omega_l\}_{l=1}^\infty$ given by Example~\ref{example1} in Gaussian shrinking soliton $(\mathbb{R}^n,\delta_{ij},\frac{\lambda}{2}|x|^2)$. Let $\sigma_i$ be the $i$-th eigenvalue of the drifted Laplacian $\Delta_\eta$ on real-valued functions, with drifting function $\eta(x)=\frac{\lambda}{2}|x|^2$, on each $\Omega_l$ with Dirichlet boundary condition. Then, we get
\begin{align*}
    \sum_{i=1}^k(\sigma_{k+1}-\sigma_i)^2 &\leq \frac{4}{n}\sum_{i=1}^k(\sigma_{k+1}-\sigma_i)\sigma_i,
\end{align*} 
\begin{align*}
    \sigma_{k+1} \leq \Big(1+\frac{2}{n}\Big)\frac{1}{k}\sum_{i=1}^k\sigma_i + \Big[\Big( \frac{2}{n}\frac{1}{k}\sum_{i=1}^k\sigma_i\Big)^2 -\Big(1+\frac{4}{n}\Big) \frac{1}{k}\sum_{j=1}^k\Big(\sigma_j-\frac{1}{k}\sum_{i=1}^k\sigma_i\Big)^2 \Big]^{\frac{1}{2}},
\end{align*}
\begin{align*}
\nonumber\sigma_{k+1} -\sigma_k \leq 2\Big[\Big( \frac{2}{n}\frac{1}{k}\sum_{i=1}^k\sigma_i\Big)^2 -\Big(1+\frac{4}{n}\Big) \frac{1}{k}\sum_{j=1}^k\Big(\sigma_j-\frac{1}{k}\sum_{i=1}^k\sigma_i\Big)^2 \Big]^{\frac{1}{2}},
\end{align*}
\begin{equation*}
    \sigma_{k+1} \leq \Big(1+\frac{4}{n}\Big)k^{\frac{2}{n}}\sigma_1
\end{equation*}
and
\begin{equation*}
    \sum_{i=1}^n(\sigma_{i+1} - \sigma_1) \leq 4\sigma_1.
\end{equation*}
\end{cor}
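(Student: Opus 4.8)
The plan is to read the statement as the specialization of Problem~\eqref{problem4} to the scalar case $T=I$, $\alpha=0$, so that the operator $\Delta_\eta+\alpha\nabla\dv_\eta$ collapses to the drifted Laplacian $\Delta_\eta$ with drift $\eta(x)=\tfrac{\lambda}{2}|x|^2$, and then to push the data through the chain of estimates already recorded in Corollaries~\ref{theorem3.1}, \ref{cor_sharpgap}, \ref{corollary3.3}, and \ref{new-corollary}. Setting $T=I$ gives $\varepsilon=\delta=1$ and $T_0=0$, while $\alpha=0$ makes the weights in \eqref{Do and D1} and in Corollary~\ref{new-corollary} degenerate to $D_0=C_0$ and $D_1=C_0$. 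Thus the entire statement reduces to evaluating the single constant $C_0=\sup_{\Omega_l}\{\tfrac12\Delta\eta-\tfrac14|\nabla\eta|^2\}$ on the family $\{\Omega_l\}$.

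The decisive computation is supplied by Example~\ref{example1}(a): for the shrinking soliton ($\lambda>0$) each annular domain $\Omega_l$ is calibrated so that $\min_{\bar\Omega_l}|x|^2=2n/\lambda$, whence by \eqref{C_0-Ex} one gets $C_0=\tfrac{\lambda}{2}\big(n-\tfrac{\lambda}{2}\min_{\bar\Omega_l}|x|^2\big)=0$ for every $l$. Consequently $D_0=D_1=0$, and it remains only to substitute $\alpha=0$ and $D_0=D_1=0$ into the four cited results. Inequality~\eqref{eq2.5} then gives the first estimate with constant $\tfrac{4(n+\alpha)}{n^2}=\tfrac4n$; the sharp-gap inequalities \eqref{new-sharpgap} and \eqref{new-sharpgap2} give the second and third, their coefficients $\tfrac{2(n+\alpha)}{n^2}$ and $\tfrac{4(n+\alpha)}{n^2}$ becoming $\tfrac2n$ and $\tfrac4n$; the recursion \eqref{equation1.11} gives the fourth, its exponent $\tfrac{2(n+\alpha)}{n^2}$ becoming $\tfrac2n$; and \eqref{equation5.4} gives the last one, since $4(1+\alpha)(\sigma_1+D_1)=4\sigma_1$. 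This realizes exactly the ``non-dependence on $\eta$'' phenomenon flagged in Remarks~\ref{remark1} and \ref{remark1-1}.

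The point that genuinely needs care is that Corollaries~\ref{theorem3.1}--\ref{new-corollary} are phrased for the vector problem, whereas the $\sigma_i$ here are the eigenvalues of the scalar drifted Laplacian. For $\alpha=0$ the operator acts diagonally, so Problem~\eqref{problem4} decouples into $n$ identical scalar Dirichlet problems, and the scalar spectrum is the vector spectrum with each eigenvalue carrying an extra factor $n$ in its multiplicity. For the quadratic estimate this is harmless: evaluating \eqref{eq2.5} at index $nk$ and using that each scalar value occurs $n$ times in $\tilde\sigma_1,\dots,\tilde\sigma_{nk}$, the common factor $n$ cancels on both sides and returns precisely the first displayed inequality, from which the gap and recursion estimates follow by the quadratic-solving and recursion steps used in the proofs of Corollaries~\ref{cor_sharpgap} and \ref{corollary3.3}. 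The lower-order inequality is the delicate one: the naive index count does not reproduce $\sum_{i=1}^n(\sigma_{i+1}-\sigma_1)\le 4\sigma_1$ from the vector form, so here I would instead invoke the scalar version of Theorem~\ref{theorem1.2}, whose proof (built on coordinate trial functions) applies verbatim to real-valued eigenfunctions and yields $\sum_{i=1}^n(\sigma_{i+1}-\sigma_1)\le 4(\sigma_1+C_0)$, which becomes $4\sigma_1$ once $C_0=0$. I expect this scalar-versus-vector bookkeeping to be the only non-routine part; everything else is direct substitution into results already established.
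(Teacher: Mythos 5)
Your proposal is correct and follows the same route as the paper: the paper's proof of Corollary~\ref{corollary2.4} consists precisely of taking $\alpha=0$ in Problem~\eqref{problem4}, observing via Example~\ref{example1}(a) that $C_0=0$ on each $\Omega_l$ in the shrinking case, and then citing \eqref{eq2.5}, \eqref{new-sharpgap}, \eqref{new-sharpgap2}, \eqref{equation1.11} and \eqref{equation5.4} in that order. Where you go beyond the paper is the scalar-versus-vector bookkeeping, and this is a genuine refinement rather than a redundancy: the cited inequalities are stated for the spectrum of the vector-valued problem, whereas the corollary concerns the scalar drifted Laplacian, and the paper passes between the two without comment. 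Your observation that for $\alpha=0$ the vector spectrum is the scalar spectrum with each multiplicity multiplied by $n$, so that evaluating \eqref{eq2.5} at index $nk$ and cancelling the common factor $n$ yields the scalar Yang-type inequality (the gap and recursion estimates then following from that scalar inequality by the algebra of Corollaries~\ref{cor_sharpgap} and \ref{corollary3.3}), supplies exactly the justification the paper omits; note that citing \eqref{equation1.11} at the vector index $nk$ directly would only give the weaker factor $(nk)^{2/n}$, so deriving the fourth estimate from the scalar inequality, as you do, is the right move. You are also right that the last inequality is the delicate one: applied at the vector level, Corollary~\ref{new-corollary} only yields $\sigma_2-\sigma_1\le 4\sigma_1$ for the scalar eigenvalues, since $\sigma_1^{\mathrm{vec}}=\cdots=\sigma_n^{\mathrm{vec}}$, and the full statement $\sum_{i=1}^n(\sigma_{i+1}-\sigma_1)\le 4\sigma_1$ indeed requires re-running the proof of Theorem~\ref{theorem1.2} for real-valued eigenfunctions with $\alpha=0$ (the trial functions $(y_i-a_i)u_1$ are the classical Ashbaugh--Benguria ones, and the argument goes through verbatim). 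In short, your proof is the paper's proof made rigorous.
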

\begin{proof}
We start by taking $\alpha=0$ as in Problem~\eqref{problem4}. Next, we note that the constant $C_0=0$ for the shrinking case. So, the required inequalities follow from inequalities \eqref{eq2.5}, \eqref{new-sharpgap}, \eqref{new-sharpgap2}, \eqref{equation1.11} and \eqref{equation5.4}, respectively.
\end{proof}

\begin{remark}\label{remark-rigidity}
Notice that Corollary~\ref{corollary2.4} can be regarded as rigidity inequalities (see Remark~\ref{remark1-1}) on the family of bounded domains $\{\Omega_l\}_{l=1}^\infty$ given by Example~\ref{example1} in Gaussian shrinking soliton $(\mathbb{R}^n,\delta_{ij},\frac{\lambda}{2}|x|^2)$.
\end{remark}

Now, we will address the expanding case. 
\begin{cor}\label{Expanding case1}
Let $\mathbb{B}(r)$ be the open ball of radius $r$ centered at the origin in Gaussian expanding soliton $(\mathbb{R}^n,\delta_{ij},\frac{\lambda}{2}|x|^2)$. Let $\sigma_1$ be the first eigenvalue of the drifted Laplacian $\Delta_\eta$ on real-valued functions, with drifting function $\eta(x)=\frac{\lambda}{2}|x|^2$, on $\mathbb{B}(r)$ with Dirichlet boundary condition. Then, we have
\begin{equation*}
\sigma_1\geq \frac{\pi^2n}{64r^2}-\frac{\lambda n}{2},
\end{equation*}
and the next estimate for the sum of lower order eigenvalues $\sigma_i$ of $\Delta_\eta$ in terms of the first eigenvalue 
\begin{equation}\label{Gap22}
\sum_{i=1}^n(\sigma_{i+1}-\sigma_1)\leq 4(\sigma_1+\frac{\lambda n}{2}).
\end{equation}
\end{cor}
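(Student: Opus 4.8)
The statement bundles two inequalities of different flavors, and I would establish them separately. The gap estimate \eqref{Gap22} is the lighter half. Since here $\eta(x)=\frac{\lambda}{2}|x|^2$ and the domain is a ball, Example~\ref{example1-1} gives $C_0=\frac{\lambda n}{2}$, so setting $\alpha=0$ in Inequality~\eqref{equation5.4} (whence $D_1=C_0$) turns it into exactly $\sum_{i=1}^n(\sigma_{i+1}-\sigma_1)\le 4\left(\sigma_1+\frac{\lambda n}{2}\right)$. The only bookkeeping point is that \eqref{equation5.4} is stated for the vector problem~\eqref{problem4}, while here $\sigma_i$ are the eigenvalues of the scalar drifted Laplacian; I would note that with $\alpha=0$ and $T=I$ the underlying trial-function argument is carried out on scalar functions and delivers the same estimate with the same constants, so \eqref{equation5.4} applies verbatim.

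The substantive half is the lower bound $\sigma_1\ge \frac{\pi^2 n}{64 r^2}-\frac{\lambda n}{2}$. My plan is to strip off the weight by the ground-state substitution $u=e^{\eta/2}w$, which is a unitary map of $L^2(\mathbb{B}(r),dm)$ onto $L^2(\mathbb{B}(r),dx)$ and preserves the Dirichlet condition. A direct computation gives $e^{-\eta/2}\Delta_\eta\!\left(e^{\eta/2}w\right)=\Delta w-Vw$ with $V=\frac14|\nabla\eta|^2-\frac12\Delta\eta$, so that $\Delta_\eta u=-\sigma u$ becomes the Dirichlet Schrödinger eigenvalue problem $-\Delta w+Vw=\sigma w$ on $\mathbb{B}(r)$. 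For $\eta=\frac{\lambda}{2}|x|^2$ one has $V=\frac{\lambda^2}{4}|x|^2-\frac{\lambda n}{2}$; here the expanding assumption $\lambda<0$ enters, since it makes the constant part $-\frac{\lambda n}{2}$ positive while the quadratic part stays nonnegative, so that $\inf_{\mathbb{B}(r)}V=-\frac{\lambda n}{2}$, attained at the origin. The min--max principle then yields
\[
\sigma_1=\lambda_1(-\Delta+V)\ge \lambda_1^{\mathrm{Dir}}(\mathbb{B}(r))+\inf_{\mathbb{B}(r)}V=\lambda_1^{\mathrm{Dir}}(\mathbb{B}(r))-\frac{\lambda n}{2},
\]
where $\lambda_1^{\mathrm{Dir}}(\mathbb{B}(r))$ is the first Dirichlet eigenvalue of the ordinary Laplacian.

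It then remains to bound $\lambda_1^{\mathrm{Dir}}(\mathbb{B}(r))$ from below. I would do this by domain monotonicity: $\mathbb{B}(r)\subset[-r,r]^n$, and separation of variables gives $\lambda_1^{\mathrm{Dir}}([-r,r]^n)=\frac{\pi^2 n}{4 r^2}$, so $\lambda_1^{\mathrm{Dir}}(\mathbb{B}(r))\ge \frac{\pi^2 n}{4 r^2}\ge \frac{\pi^2 n}{64 r^2}$; the stated (non-sharp) constant then follows a fortiori, and combining it with the displayed chain gives $\sigma_1\ge \frac{\pi^2 n}{64 r^2}-\frac{\lambda n}{2}$, which in particular makes the right-hand side of \eqref{Gap22} strictly positive. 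I expect the main obstacle to be conceptual rather than computational: getting the ground-state reduction exactly right and reading off $\inf_{\mathbb{B}(r)}V$ correctly---this is the step that converts the weighted problem into an honest Schrödinger problem and isolates the shift $-\frac{\lambda n}{2}$---whereas once the nonnegative harmonic term is discarded, the eigenvalue bound for the ball is soft.
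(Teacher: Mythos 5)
Your proof is correct, but the route you take for the lower bound on $\sigma_1$ is genuinely different from the paper's. For the gap estimate \eqref{Gap22} you and the paper do the same thing: $\alpha=0$, $C_0=\frac{\lambda n}{2}$ on the ball (Example~\ref{example1-1}), plug into \eqref{equation5.4}; your remark about transferring the vector-valued statement to the scalar drifted Laplacian is the right (and needed) bookkeeping, which the paper leaves implicit. For the lower bound, however, the paper never performs a ground-state reduction. Instead it sandwiches the fundamental gap: since $\lambda<0$ makes both $\eta$ and $\frac12\Delta\eta-\frac14|\nabla\eta|^2$ concave on the convex ball, Ma and Liu's Theorem~3 \cite{Ma-Liu} gives $\sigma_2-\sigma_1\geq\frac{\pi^2}{16r^2}$, while the paper's own universal inequality \eqref{new-sharpgap2} (or \eqref{equation1.11}) at $k=1$, $\alpha=0$ gives $\sigma_2-\sigma_1\leq\frac{4}{n}\sigma_1+2\lambda$; solving the resulting inequality for $\sigma_1$ produces exactly the constant $\frac{\pi^2 n}{64r^2}$. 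Your substitution $u=e^{\eta/2}w$, which turns $\Delta_\eta$ into the Schrödinger operator $-\Delta+V$ with $V=\frac{\lambda^2}{4}|x|^2-\frac{\lambda n}{2}$, followed by min--max and domain monotonicity, is self-contained (no external gap theorem), yields the stronger bound $\sigma_1\geq\frac{\pi^2 n}{4r^2}-\frac{\lambda n}{2}$ --- the paper's $\frac{1}{64}$ is an artifact of combining the two gap inequalities --- and in fact does not need $\lambda<0$ at all, since $\inf V=-\frac{\lambda n}{2}$ at the origin for either sign of $\lambda$. What the paper's route buys instead is a demonstration that its own machinery (the universal upper bound on $\sigma_2-\sigma_1$) has nontrivial consequences when played against an independent lower bound, which is the thematic point of that section; the expanding hypothesis is genuinely load-bearing there, since Ma--Liu's concavity assumptions fail for $\lambda>0$.
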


\begin{proof}
Let $\sigma_1$ and $\sigma_2$ be the first and second eigenvalues of the drifted Laplacian $\Delta_\eta$ on real-valued functions on $\mathbb{B}(r)$ with Dirichlet boundary condition, respectively. If $\lambda<0$, then both $\eta=\frac{\lambda}{2}|x|^2$ and $f= \frac{1}{2} \Delta \eta - \frac{1}{4}|\nabla \eta|^2$ are concave functions on the closure of the convex domain $\mathbb{B}(r)$. Thus, we can apply Theorem~3 by Ma and Liu~\cite{Ma-Liu} to obtain 
\begin{equation}\label{Gap1}
\sigma_2-\sigma_1\geq \frac{\pi^2}{16r^2}.
\end{equation}
On the other hand, note that we can use \eqref{new-sharpgap2} or \eqref{equation1.11}, for $\alpha=0$, to get 
\begin{equation}\label{Gap2}
\sigma_2-\sigma_1\leq \frac{4}{n}\sigma_1+2\lambda.
\end{equation}
Combining \eqref{Gap1} and \eqref{Gap2}, we conclude that
\begin{equation*}
\sigma_1\geq \frac{\pi^2n}{64r^2}-\frac{\lambda n}{2}.
\end{equation*}
Moreover, we can use \eqref{equation5.4}, for $\alpha=0$, to obtain \eqref{Gap22}.
\end{proof}

\begin{cor}\label{Expanding case2}
Let us consider the family of domains $\{\Omega_l\}_{l=1}^\infty$ given by Example~\ref{example1} in Gaussian expanding soliton $(\mathbb{R}^n,\delta_{ij},\frac{\lambda}{2}|x|^2)$. Let $\sigma_i$ be the $i$-th eigenvalue of the drifted Laplacian $\Delta_\eta$ on real-valued functions, with drifting function $\eta(x)=\frac{\lambda}{2}|x|^2$, on each  $\Omega_l$ with Dirichlet boundary condition. Then, it is valid the following estimate for the sum of lower order eigenvalues of $\Delta_\eta$ in terms of the first eigenvalue:
\begin{equation*}
\sum_{i=1}^n(\sigma_{i+1}-\sigma_1)\leq 4(\sigma_1+\lambda n).
\end{equation*}
\end{cor}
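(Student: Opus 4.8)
The plan is to obtain this inequality as the direct expanding-soliton specialization of the lower-order eigenvalue estimate~\eqref{equation5.4} (equivalently Corollary~\ref{new-corollary}), mirroring the final step in the proof of Corollary~\ref{Expanding case1}. First I would place the scalar drifted Laplacian inside the framework of Problem~\eqref{problem4} by taking $\alpha=0$ and $T=I$; this is the setting in which the $\sigma_i$ of~\eqref{equation5.4} are the Dirichlet eigenvalues of $\Delta_\eta$ on real-valued functions, and it is the same reduction already carried out in Corollary~\ref{Expanding case1}.

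Second, I would evaluate the constant $D_1=-\alpha\|\dv_\eta{\bf u}_1\|^2+C_0$ appearing in~\eqref{equation5.4}. Since $\alpha=0$, the first term drops out and $D_1=C_0$. For the drifting function $\eta(x)=\frac{\lambda}{2}|x|^2$ one has $C_0=\sup_{\Omega}\{\frac{1}{2}\Delta\eta-\frac{1}{4}|\nabla\eta|^2\}=\sup_{\Omega}\{\frac{\lambda}{2}(n-\frac{\lambda}{2}|x|^2)\}$, and on each annulus $\Omega_l=\{2n/|\lambda|<|x|^2<r_l^2\}$ of the expanding soliton ($\lambda<0$) Example~\ref{example1}(b) computes this supremum to be $\lambda n$, independently of $l$. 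The single point that must be handled with care here is the sign bookkeeping: because $\lambda<0$ the factor $\frac{\lambda}{2}$ is negative, so the supremum is realized where $n-\frac{\lambda}{2}|x|^2$ is smallest, namely at the inner radius $|x|^2=2n/|\lambda|$, which yields $C_0=\lambda n$ rather than the value $\lambda n/2$ attained for the full ball in Corollary~\ref{Expanding case1}.

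Finally, I would substitute $\alpha=0$ and $D_1=C_0=\lambda n$ into~\eqref{equation5.4} to conclude
\[
\sum_{i=1}^n(\sigma_{i+1}-\sigma_1)\leq 4(1+\alpha)(\sigma_1+D_1)=4(\sigma_1+\lambda n),
\]
which is the asserted estimate. I do not anticipate any genuine obstacle: all the analytic content is already carried by Theorem~\ref{theorem1.2} through its consequence~\eqref{equation5.4}, and the geometry of the annuli enters only via the elementary constant computation of Example~\ref{example1}(b). The argument is therefore essentially a one-line specialization, the only subtle ingredient being the correct sign and value of $C_0$ in the expanding regime.
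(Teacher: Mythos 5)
Your proposal is correct and follows exactly the paper's own route: the paper's proof is precisely the one-line specialization of Inequality~\eqref{equation5.4} with $\alpha=0$, combined with the value $C_0=\lambda n$ computed on the annuli $\Omega_l$ in Example~\ref{example1}(b). Your sign bookkeeping for the expanding case ($\lambda<0$, so the supremum of $\frac{\lambda}{2}(n-\frac{\lambda}{2}|x|^2)$ is attained at the inner radius $|x|^2=2n/|\lambda|$, giving $C_0=\lambda n$ rather than $\lambda n/2$) reproduces the paper's computation faithfully.
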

\begin{proof}
In fact, we can use \eqref{equation5.4}, for $\alpha=0$, to deduce the required estimate.
\end{proof}

\begin{remark}
A final remark is in order. We observe that Corollaries~\ref{theorem3.1} and~\ref{new-corollary} can be obtained from Corollaries~\ref{corollary-6.1} and \ref{corollary-6.2}, respectively. Whereas Corollaries~\ref{cor_sharpgap} and \ref{corollary3.3} can be obtained from Corollaries~\ref{corollary-6.3} and \ref{corollary3.3-CY}, respectively. However, as we already mentioned before, they have been a prototype for us.
\end{remark}

\section{Preliminaries}\label{preliminar}

This section is brief and serves to set the stage, introducing some basic notation and describing what is meant by the properties of a $(1,1)$-tensor in a bounded domain $\Omega\subset\mathbb{R}^n$ with smooth boundary $\partial\Omega$.

Throughout the paper, we will be constantly using the identification of a $(0,2)$-tensor $T:\mathfrak{X}(\Omega)\times\mathfrak{X}(\Omega)\to C^{\infty}(\Omega)$ with its associated $(1,1)$-tensor $T:\mathfrak{X}(\Omega)\to\mathfrak{X}(\Omega)$ by the equation 
\begin{equation*}
    \langle T(X), Y \rangle = T(X, Y).
\end{equation*}
In particular, the tensor $\langle , \rangle$ will be identified with the identity $I$ in $\mathfrak{X}(\Omega)$. From the definition of $\eta$-divergence of $X$ (see Eq.~\eqref{eq1.1}) and the usual properties of divergence of vector fields, one has
\begin{equation}\label{property1}
    \dv_\eta(fX)=f\dv_\eta X + \nabla f \cdot X ,
\end{equation}
for all $f \in C^\infty(\Omega)$.

Notice that the $(\eta,T)$-divergence form of $\mathscr{L}$ on $\Omega$ allows us to check that the divergence theorem remains true in the form
\begin{equation}\label{property2}
   \int_\Omega\dv_\eta X dm = \int_{\partial\Omega} \langle X, \nu\rangle d\mu.
\end{equation}
In particular, for $X=T(\nabla f)$, 
\begin{equation*}
   \int_\Omega\mathscr{L}{f}dm = \int_{\partial\Omega}T(\nabla f, \nu) d\mu, 
\end{equation*}
where $dm = e^{-\eta}d\Omega$ and $d\mu = e^{-\eta}d\partial\Omega$ are the weight volume form on $\Omega$ and the volume form on the boundary $\partial\Omega$ induced by the outward unit normal vector $\nu$ on $\partial \Omega$, respectively. Thus, the integration by parts formula is given by
\begin{equation}\label{parts}
     \int_{\Omega}\ell\mathscr{L}{f}dm =-\int_\Omega T(\nabla\ell, \nabla f)dm + \int_{\partial\Omega}\ell T(\nabla f, \nu) d\mu,
\end{equation}
for all $\ell, f \in C^\infty(\Omega)$. Hence, $\mathscr{L}$ is a formally self-adjoint operator in the space of all real-valued functions in $L^2(\Omega, dm)$ that vanish on $\partial \Omega$ in the sense of the trace. Furthermore, from \eqref{property1} and \eqref{property2} we obtain
\begin{equation}\label{divformula}
    \int_\Omega {\bf v} \cdot \nabla(\dv_\eta {\bf u})dm = -\int_\Omega \dv_\eta {\bf u} \dv_\eta {\bf v} dm, 
\end{equation}
for all  vector-valued function ${\bf u}=(u^1, u^2, \ldots, u^n)$ and ${\bf v}=(v^1, v^2, \ldots, v^n)$ both from $\Omega$ to $\mathbb{R}^n$, with ${\bf v}$ vanishing on $\partial \Omega$.

We conclude that $\mathscr{L}+\alpha\nabla\dv_\eta$ is a formally self-adjoint operator in the Hilbert space $\mathbb{L}^2(\Omega,dm)$ of all vector-valued functions that vanish on $\partial \Omega$ in the sense of the trace, with inner product induced by Eqs.~\eqref{parts} and \eqref{divformula}.  Thus the eigenvalue problem~\eqref{problem1} has a real and discrete spectrum 
$0 < \sigma_1 \leq \sigma_2 \leq \cdots \leq \sigma_k \leq \cdots\to\infty$,
where each $\sigma_i$ is repeated according to its multiplicity.

It is worth mentioning here the paper by Gomes and Miranda~\cite[Section~2]{GomesMiranda} from which we know some geometric motivations to work with the operator $\mathscr{L}$ in the $(\eta,T)$-divergence form in bounded domains in Riemannian manifolds. They showed that it appears as the trace of a $(1,1)$-tensor on a Riemannian manifold $M$, and computed a Bochner-type formula for it. An interesting fact is that Eq.~ $(2.3)$ of \cite{GomesMiranda} relates the operator $\mathscr{L}$ to Cheng and Yau's operator $\Box$ defined in~\cite{ChengYau}. In particular, if we take $T$ to be the Einstein tensor, then this latter relation is likely to have applications in physics.

For a vector-valued function ${\bf u}=(u^1, u^2, \ldots, u^n)$ from $\Omega$ to $\mathbb{R}^n$, we define 
\begin{eqnarray*}
\nabla {\bf u} = ( \nabla u^1, \ldots ,  \nabla u^n).
\end{eqnarray*}

Now, we are considering two definitions for $T$, and since there is no danger of confusion, we are using the same notation $T$ for both definitions.
Let $X$ be a vector field on $\Omega$, we define 
\begin{eqnarray*}
T(\nabla{\bf u}) = (T(\nabla u^1), \ldots , T(\nabla u^n))
\end{eqnarray*}
and
\begin{align}\label{definition}
    T(X, \nabla {\bf u}) &= (\langle T(X), \nabla { u}^1 \rangle, \ldots, \langle T(X), \nabla { u}^n \rangle)\nonumber\\
    &=(\langle X, T(\nabla u^1) \rangle, \ldots, \langle X, T(\nabla u^n) \rangle)\nonumber\\
     &=(T(X, \nabla u^1), \ldots, T(X, \nabla u^n)).
\end{align}
Moreover, for all real-valued functions $f, \ell \in C^\infty (\Omega)$ it is immediate from the properties of $\dv_\eta$ and the symmetry of $T$ that
\begin{equation*}
    \mathscr{L}(f\ell) = f\mathscr{L}\ell + 2 T(\nabla f, \nabla\ell) + \ell\mathscr{L}f.
\end{equation*}
So, the following equation is well understood for a vector-valued function ${\bf u}$ and a real-valued function $f \in C^\infty (\Omega)$
\begin{align}\label{equation2.2}
    &\mathscr{L}(f {\bf u}) = (\mathscr{L}(f u^1), \ldots, \mathscr{L}(f u^n))\nonumber\\
    &= (f\mathscr{L}u^1+2 \langle T(\nabla f), \nabla u^1\rangle+ \mathscr{L}(f) u^1, \ldots, f\mathscr{L}u^n+2\langle T(\nabla f), \nabla u^n\rangle+ \mathscr{L}(f) u^n) \nonumber\\
    &=f(\mathscr{L}u^1, \ldots, \mathscr{L}u^n) + 2(\langle T(\nabla f), \nabla u^n\rangle, \ldots, \langle T(\nabla f), \nabla u^n\rangle) + \mathscr{L}(f)( u^1, \ldots, u^n)\nonumber\\
    &=f\mathscr{L}{\bf u} + 2T(\nabla f, \nabla {\bf u}) + \mathscr{L}f{\bf u}.
\end{align}
Besides, we are using the classical norms: $|{\bf u}|^2 = \sum_{i=1}^n(u^i)^2$ and $\|{\bf u}\|^2= \int_\Omega |{\bf u}|^2 dm$. We observe that $\varepsilon I\leq T\leq \delta I$ implies
\begin{align}\label{T-property}
|T(X)|^2 \leq \delta  \langle T(X), X\rangle \quad \mbox{for all} \quad X \in \mathfrak{X}(\mathbb{R}^n).
\end{align}
In particular, we obtain
\begin{align}\label{T-norm}
    |T(\nabla\eta)|^2 \leq \delta^2|\nabla\eta|^2.
\end{align}

\section{Three technical lemmas}
In order to prove our first theorem we will need three technical lemmas. The first one is motivated by the corresponding results for Problem~\eqref{problem3} proven by Chen et al.~\cite[Lemma~2.1]{CCWX} and for Problem~\eqref{problem4} proven by Du and Bezerra~\cite[Lemma~2.1]{DuBezerra}. Here, we follow the steps of the proof of Lemma~2.1 in \cite{CCWX} with appropriate adaptations for $\mathscr{L}+\alpha\nabla \dv_\eta$.
\begin{lem}\label{lema1}
Let $\Omega\subset\mathbb{R}^n$ be a bounded domain, $\sigma_i$ be the i-th eigenvalue of Problem~\eqref{problem1} and ${\bf u}_i$ be a normalized vector-valued eigenfunction corresponding to $\sigma_i$. Then, for any $f\in C^2(\Omega)\cap C^1(\partial \Omega)$ and any positive constant $B$, we obtain
\begin{align*}
    \sum_{i=1}^k(\sigma_{k+1}-\sigma_i)^2 &\Big\{(1-B)\int_{\Omega}T(\nabla f, \nabla f)|{\bf u}_i|^2dm -B\alpha \int_\Omega|\nabla f\cdot{\bf u}_i|^2dm\Big\}\\
    &\leq \frac{1}{B}\sum_{i=1}^k(\sigma_{k+1}-\sigma_i)\|T(\nabla f, \nabla {\bf u}_i) + \frac{1}{2}\mathscr{L}f{\bf u}_i\|^2.
\end{align*}
\end{lem}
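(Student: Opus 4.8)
The plan is to adapt the Rayleigh--Ritz (Payne--P\'olya--Weinberger--Yang) trial-function scheme to the operator $\mathscr{L}+\alpha\nabla\dv_\eta$ in the weighted space $\mathbb{L}^2(\Omega,dm)$. First I would fix $f$ and the normalized eigenfunctions ${\bf u}_1,\dots,{\bf u}_k$, set the symmetric coefficients $a_{ij}=\int_\Omega f\,{\bf u}_i\cdot{\bf u}_j\,dm$, and introduce the trial functions $\phi_i=f{\bf u}_i-\sum_{j=1}^k a_{ij}{\bf u}_j$. Since ${\bf u}_i$ and ${\bf u}_j$ vanish on $\partial\Omega$, each $\phi_i$ lies in the form domain and is $\mathbb{L}^2(\Omega,dm)$-orthogonal to ${\bf u}_1,\dots,{\bf u}_k$, so the variational characterization of $\sigma_{k+1}$ gives $(\sigma_{k+1}-\sigma_i)\|\phi_i\|^2\le -\int_\Omega\phi_i\cdot(\mathscr{L}+\alpha\nabla\dv_\eta)\phi_i\,dm$ after using $\int_\Omega\phi_i\cdot{\bf u}_j\,dm=0$. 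The next step is to expand $(\mathscr{L}+\alpha\nabla\dv_\eta)(f{\bf u}_i)$ via the product rule \eqref{equation2.2} for $\mathscr{L}(f{\bf u}_i)$ together with \eqref{property1} for $\dv_\eta(f{\bf u}_i)$; invoking the eigenvalue equation this produces $(\mathscr{L}+\alpha\nabla\dv_\eta)(f{\bf u}_i)=-\sigma_i f{\bf u}_i+2\psi_i+\alpha(\dv_\eta{\bf u}_i)\nabla f+\alpha\nabla(\nabla f\cdot{\bf u}_i)$, where $\psi_i=T(\nabla f,\nabla{\bf u}_i)+\tfrac12\mathscr{L}f\,{\bf u}_i$ is exactly the quantity in the right-hand side of the lemma.

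The heart of the computation is the evaluation of the diagonal term $-\int_\Omega f{\bf u}_i\cdot\big[2\psi_i+\alpha(\dv_\eta{\bf u}_i)\nabla f+\alpha\nabla(\nabla f\cdot{\bf u}_i)\big]\,dm$. Here I would use the pointwise identity ${\bf u}_i\cdot T(\nabla f,\nabla{\bf u}_i)=\tfrac12 T(\nabla f,\nabla|{\bf u}_i|^2)$ and integrate by parts with the weighted divergence theorem \eqref{property2} and the relation $\dv_\eta(fT(\nabla f))=f\mathscr{L}f+T(\nabla f,\nabla f)$; because ${\bf u}_i=0$ on $\partial\Omega$ every boundary term drops and the $f\mathscr{L}f$ contributions cancel, leaving precisely $\int_\Omega T(\nabla f,\nabla f)|{\bf u}_i|^2\,dm$. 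Treating the $\alpha$-terms the same way, using \eqref{divformula} together with the cancellation of the mixed $f(\dv_\eta{\bf u}_i)(\nabla f\cdot{\bf u}_i)$ integrals, yields exactly $\alpha\int_\Omega|\nabla f\cdot{\bf u}_i|^2\,dm$. For the off-diagonal coefficients $b_{ij}=\int_\Omega 2\psi_i\cdot{\bf u}_j\,dm$ I would exploit the formal self-adjointness of $\mathscr{L}$ (from \eqref{parts}) and of $\nabla\dv_\eta$ (from \eqref{divformula}), writing $2\psi_i=\mathscr{L}(f{\bf u}_i)-f\mathscr{L}{\bf u}_i$, to obtain $b_{ij}=\int_\Omega f{\bf u}_i\cdot\mathscr{L}{\bf u}_j-\int_\Omega f{\bf u}_j\cdot\mathscr{L}{\bf u}_i$, which is antisymmetric in $(i,j)$ and reduces to $(\sigma_i-\sigma_j)a_{ij}$ modulo an antisymmetric $\alpha$-correction produced by $\nabla\dv_\eta$.

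With these identities in hand the remaining step is the Yang-type optimization. I would multiply the Rayleigh--Ritz inequality by $(\sigma_{k+1}-\sigma_i)$, replace $-2\int_\Omega\phi_i\cdot\psi_i\,dm$ by the value coming from the diagonal computation, and apply the Cauchy--Schwarz and Young inequalities to the cross term $-2\int_\Omega\phi_i\cdot\psi_i\,dm$, splitting it into a $B$-multiple of a $\|\phi_i\|^2$-term and a $\tfrac1B$-multiple of a $\|\psi_i\|^2$-term; this is what introduces the free constant $B$ and the factors $(1-B)$ and $\tfrac1B$. Bounding the resulting $\|\phi_i\|^2$-contribution again by the Rayleigh--Ritz inequality reintroduces $\int_\Omega T(\nabla f,\nabla f)|{\bf u}_i|^2\,dm+\alpha\int_\Omega|\nabla f\cdot{\bf u}_i|^2\,dm$, and subtracting its $B$-multiple is precisely what turns the $+\int_\Omega T(\nabla f,\nabla f)|{\bf u}_i|^2\,dm$ into the stated $(1-B)$-coefficient while creating the term $-B\alpha\int_\Omega|\nabla f\cdot{\bf u}_i|^2\,dm$. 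The main obstacle I anticipate is controlling the double sums $\sum_{i,j}(\sigma_{k+1}-\sigma_i)^2 a_{ij}b_{ij}$ that appear after summation over $i$: here I would play the symmetry $a_{ij}=a_{ji}$ against the antisymmetry $b_{ij}=-b_{ji}$, and use a refined Cauchy--Schwarz obtained by projecting $\psi_i$ onto the orthogonal complement of $\{{\bf u}_1,\dots,{\bf u}_k\}$, so that the cross terms collapse to a manifestly nonpositive quantity that can be discarded. Keeping careful track of the $\alpha$-divergence contributions through every integration by parts, and verifying that they reassemble into the single term $-B\alpha\int_\Omega|\nabla f\cdot{\bf u}_i|^2\,dm$ rather than leaving residual indefinite cross terms, is where the bookkeeping will be most delicate.
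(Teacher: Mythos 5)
Your proposal is correct and follows essentially the same route as the paper's proof: the same trial functions $f{\bf u}_i-\sum_{j}a_{ij}{\bf u}_j$, the same integration-by-parts identities for the diagonal terms (with the $f\mathscr{L}f$ and mixed $\alpha$-cancellations), the antisymmetric coefficients $b_{ij}$ reduced to $(\sigma_i-\sigma_j)a_{ij}$ plus an antisymmetric $\alpha$-correction, the Cauchy--Schwarz refined by projecting $T(\nabla f,\nabla{\bf u}_i)+\tfrac12\mathscr{L}f\,{\bf u}_i$ onto the orthogonal complement of ${\bf u}_1,\dots,{\bf u}_k$, and the Yang-type symmetry/antisymmetry cancellation of the double sums, which in the paper collapse into a discardable perfect square. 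The only differences are cosmetic (your $b_{ij}$ carries an extra factor of $2$, and you derive its antisymmetry via formal self-adjointness rather than by direct computation), so the plan is sound.
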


\begin{proof}
Let ${\bf u}_i$ be a normalized vector-valued eigenfunction corresponding to $\sigma_i$, i.e., it satisfies
\begin{equation}\label{eq4.1}
    \left\{ \begin{array}{ccccc} 
    \mathscr{L}{\bf u}_i + \alpha \nabla(\dv_\eta{\bf u}_i) &=& -\sigma_i  {\bf u}_i & \mbox{in} & \Omega,  \\
      {\bf u}_i &=& 0 & \mbox{on} & \partial \Omega,  \\ 
     \int_\Omega  {\bf u}_i \cdot {\bf u}_j dm &=& \delta_{ij} & \mbox{for any} & i, j.
    \end{array}
    \right.
\end{equation}
Since $\sigma_{k+1}$ is the minimum value of the Rayleigh quotient (see, e.g., \cite[Theorem~9.43]{PJ-Olver}), we must have 
\begin{equation}\label{rayleigh-1}
    \sigma_{k+1} \leq - \frac{\int_\Omega {\bf v}\cdot(\mathscr{L}{\bf v}+\alpha \nabla(\dv_\eta {\bf v}))dm}{\int_\Omega |{\bf v}|^2dm},
\end{equation}
for any nonzero vector-valued function ${\bf v}: \Omega \to \mathbb{R}^n$ satisfying 
\begin{equation*}
    {\bf v}|_{\partial \Omega}=0 \quad \mbox{and} \quad \int_\Omega  {\bf v} \cdot{\bf u}_i dm = 0, \quad \mbox{for any}, \quad i=1,\ldots, k.
\end{equation*}
Let us denote by $a_{ij}=\int_\Omega f  {\bf u}_i\cdot {\bf u}_j dm = a_{ji}$ to consider the vector-valued functions ${\bf v}_i$ given by
\begin{equation}\label{vi}
    {\bf v}_i=f{\bf u}_i - \sum_{j=1}^ka_{ij}{\bf u}_j,
\end{equation}
so that 
\begin{equation}\label{eq7}
{\bf v}_i|_{\partial \Omega}=0 \quad \mbox{and}\quad \int_\Omega{\bf u}_j\cdot{\bf v}_i dm = 0, \quad \mbox{for any}\quad i,j=1,\ldots, k.
\end{equation}
Then, we can take ${\bf v}={\bf v}_i$ in \eqref{rayleigh-1} and use formula~\eqref{divformula} to obtain
\begin{equation}\label{rayleigh}
\sigma_{k+1}\|{\bf v}_i\|^2 \leq \int_\Omega \Big(-{\bf v}_i\cdot\mathscr{L}{\bf v}_i + \alpha(\dv_\eta{\bf v}_i)^2\Big)dm.
\end{equation}
From \eqref{vi} and \eqref{equation2.2}, we get 
\begin{align*}
    \mathscr{L}{\bf v}_i=&f\mathscr{L}{\bf u}_i + 2T(\nabla f, \nabla{\bf u}_i) + \mathscr{L}f{\bf u}_i-\sum_{j=1}^ka_{ij}\mathscr{L}{\bf u}_j\\
    =&f(-\sigma_i{\bf u}_i-\alpha \nabla(\dv_\eta{\bf u}_i))+2T(\nabla f, \nabla {\bf u}_i) + \mathscr{L}f{\bf u}_i\\
    &-\sum_{j=1}^ka_{ij}(-\sigma_j{\bf u}_j-\alpha \nabla(\dv_\eta {\bf u}_j)),
\end{align*}
i.e.,
\begin{align*}
    \mathscr{L}{\bf v}_i =&-\sigma_if{\bf u}_i+\sum_{j=1}^ka_{ij}\sigma_j{\bf u}_j+2T(\nabla f, \nabla{\bf u}_i)+\mathscr{L}f{\bf u}_i\\
   &-\alpha f\nabla(\dv_\eta {\bf u}_i)+\alpha\sum_{j=1}^ka_{ij}\nabla(\dv_\eta {\bf u}_j).
\end{align*}
Therefore,
\begin{align}\label{eq9}
    \int_\Omega -{\bf v}_i \cdot \mathscr{L}{\bf v}_i dm =&  \sigma_i\|{\bf v}_i\|^2 - \int_\Omega {\bf v}_i \cdot (2T(\nabla f, \nabla {\bf u}_i)
   +\mathscr{L}f{\bf u}_i)dm\nonumber \\
   &+\alpha \Big(\int_\Omega f {\bf v}_i\cdot\nabla ( \dv_\eta{\bf u}_i)dm - \sum_{j=1}^ka_{ij}\int_\Omega {\bf v_i}\cdot \nabla(\dv_\eta {\bf u}_j) dm \Big).
\end{align}
From \eqref{property1} and \eqref{divformula}
\begin{equation*}
   \int_\Omega f {\bf v}_i\cdot\nabla(\dv_\eta {\bf u}_i) dm = -\int_\Omega f \dv_\eta {\bf u}_i \dv_\eta {\bf v}_i dm  -\int_\Omega  \dv_\eta {\bf u}_i \nabla f \cdot {\bf v}_i dm .
\end{equation*}
But, from \eqref{vi}
\begin{equation*}
    -f \dv_\eta {\bf u}_i = - \dv_\eta {\bf v}_i + \nabla f \cdot {\bf u}_i- \sum_{j=1}^ka_{ij}\dv_\eta {\bf u}_j,
\end{equation*}
then
\begin{align*}
   \int_\Omega f {\bf v}_i\cdot\nabla(\dv_\eta {\bf u}_i) dm =& -\int_\Omega  (\dv_\eta {\bf v}_i)^2 dm  + \int_\Omega  \dv_\eta {\bf v}_i \nabla f \cdot {\bf u}_i dm \\
   &- \sum_{j=1}^ka_{ij}\int_\Omega \dv_\eta {\bf u}_j\dv_\eta{\bf v}_i dm -\int_\Omega  \dv_\eta {\bf u}_i \nabla f \cdot {\bf v}_i dm\\
   =& -\int_\Omega  (\dv_\eta {\bf v}_i)^2 dm  + \int_\Omega  \dv_\eta {\bf v}_i \nabla f \cdot {\bf u}_i dm \\
   &+ \sum_{j=1}^ka_{ij}\int_\Omega {\bf v}_i\cdot \nabla(\dv_\eta {\bf u}_j) dm -\int_\Omega  \dv_\eta {\bf u}_i \nabla f \cdot {\bf v}_i dm.
\end{align*}
Thus, 
\begin{align}\label{eq4.7}
   &\int_\Omega f {\bf v}_i\cdot\nabla(\dv_\eta {\bf u}_i) dm - \sum_{j=1}^ka_{ij}\int_\Omega {\bf v}_i\cdot\nabla(\dv_\eta {\bf u}_j) dm \nonumber\\
    &= -\int_\Omega (\dv_\eta {\bf v}_i)^2 dm + \int_\Omega (\dv_\eta {\bf v}_i\nabla f \cdot {\bf u}_i - \dv_\eta {\bf u}_i \nabla f \cdot {\bf v}_i)dm \nonumber\\
    &=-\int_\Omega (\dv_\eta {\bf v}_i)^2dm -\int_\Omega (\nabla(\nabla f \cdot {\bf u}_i)+\dv_\eta {\bf u}_i \nabla f)\cdot {\bf v}_i dm.
\end{align}
So, replacing \eqref{eq4.7} into \eqref{eq9}, we obtain
\begin{align}\label{eq9_2}
    -\sigma_i\|{\bf v}_i\|^2=& \int_\Omega {\bf v}_i \cdot \mathscr{L}{\bf v}_i dm  - \int_\Omega {\bf v}_i \cdot (2T(\nabla f, \nabla {\bf u}_i)
   +\mathscr{L}f{\bf u}_i)dm\nonumber \\
   &-\alpha\int_\Omega (\dv_\eta {\bf v}_i)^2dm -\alpha\int_\Omega (\nabla(\nabla f \cdot {\bf u}_i)+\dv_\eta {\bf u}_i \nabla f)\cdot {\bf v}_i dm.
\end{align}
Hence, from \eqref{rayleigh} and \eqref{eq9_2}, we have
\begin{align}\label{eq10}
    (\sigma_{k+1}-\sigma_i)\|{\bf v}_i\|^2 \leq& -\int_\Omega (2T(\nabla f, \nabla{\bf u}_i)
   +\mathscr{L}f{\bf u}_i)\cdot {\bf v}_idm \nonumber\\
   &-\alpha\int_\Omega (\nabla(\nabla f\cdot {\bf u}_i)+\dv_\eta {\bf u}_i\nabla f)\cdot {\bf v}_idm.
\end{align}
Using integration by parts formula~\eqref{parts} and \eqref{vi}, we get
\begin{equation}\label{eq13}
    \int_\Omega (2T(\nabla f, \nabla {\bf u}_i)
   +\mathscr{L}f{\bf u}_i)\cdot {\bf v}_idm = -\int_\Omega |{\bf u}_i|^2T(\nabla f,\nabla f)dm - 2\sum_{j=1}^ka_{ij}b_{ij},
\end{equation}
where
\begin{equation}\label{bij}
    b_{ij}=\int_\Omega {\Big (}T(\nabla f, \nabla {\bf u}_i)
   +\frac{1}{2}\mathscr{L}f{\bf u}_i{\Big )}\cdot{\bf u}_jdm=-b_{ji}.
\end{equation}
Moreover, by straightforward computation from \eqref{property1}, \eqref{property2} and \eqref{vi}, we have
\begin{align}\label{eq14}
    &\int_\Omega (\nabla(\nabla f \cdot {\bf u}_i)+\dv_\eta {\bf u}_i \nabla f)\cdot {\bf v}_idm\nonumber\\
    &=\sum_{j=1}^ka_{ij}\int_\Omega(\nabla f\cdot {\bf u}_i\dv_\eta {\bf u}_j - \dv_\eta {\bf u}_i \nabla f\cdot{\bf u}_j)dm - \int_\Omega |\nabla f \cdot {\bf u}_i|^2dm.
\end{align}
Putting 
\begin{equation}\label{w_i}
    w_i=-\int_\Omega (2T(\nabla f, \nabla {\bf u}_i)
   +\mathscr{L}f{\bf u}_i)\cdot {\bf v}_idm  -\alpha\int_\Omega {\Big (}\nabla(\nabla f \cdot {\bf u}_i)+\dv_\eta {\bf u}_i \nabla f{\Big )}\cdot {\bf v}_idm,
\end{equation}
from \eqref{eq13} and \eqref{eq14}
\begin{align*}
    w_i=&\int_\Omega|{\bf u}_i|^2T(\nabla f, \nabla f)dm +2\sum_{j=1}^ka_{ij}b_{ij} \nonumber\\
    &- \alpha \sum_{j=1}^ka_{ij}\int_\Omega(\nabla f \cdot {\bf u}_i\dv_\eta {\bf u}_j- \dv_\eta {\bf u}_i \nabla f \cdot {\bf u}_j)dm +\alpha \int_\Omega |\nabla f \cdot {\bf u}_i|^2dm.
\end{align*}
By a similar computation as in~\cite[Eq. ~(2.9)]{CCWX}, from \eqref{eq4.1} and \eqref{bij}, we get
\begin{equation*}
    2b_{ij}=(\sigma_i-\sigma_j)a_{ij} +\alpha \int_\Omega (\nabla f\cdot {\bf u}_i\dv_\eta {\bf u}_j- \dv_\eta {\bf u}_i\nabla f \cdot {\bf u}_j) dm,
\end{equation*}
then
\begin{equation*}
    2\sum_{j=1}^ka_{ij} b_{ij}=\sum_{j=1}^k(\sigma_i-\sigma_j)a_{ij}^2 +\alpha\sum_{j=1}^k a_{ij}\int_\Omega (\nabla f \cdot {\bf u}_i\dv_\eta {\bf u}_j - \dv_\eta {\bf u}_i \nabla f \cdot {\bf u}_j)dm.
\end{equation*}
Thus,
\begin{align}\label{eq15}
    w_i&=\int_\Omega|{\bf u}_i|^2T(\nabla f, \nabla f)dm + \sum_{j=1}^k(\sigma_i-\sigma_j)a_{ij}^2
    +\alpha \int_\Omega |\nabla f \cdot {\bf u}_i|^2dm.
\end{align}
Furthermore, from \eqref{eq10} and \eqref{w_i}, we have
\begin{equation}\label{eq16}
    (\sigma_{k+1}-\sigma_i)\|{\bf v}_i\|^2 \leq w_i.
\end{equation}
For any constant $B>0$, from \eqref{eq7}, \eqref{eq13} and the inequality of Cauchy-Schwarz, we infer
\begin{align*}
    (\sigma_{k+1}&-\sigma_i)^2\Big( \int_\Omega T(\nabla f, \nabla f)|{\bf u}_i|^2dm + 2\sum_{j=1}^ka_{ij}b_{ij}\Big)\\
    =&(\sigma_{k+1}-\sigma_i)^2\Bigg\{ -2\int_\Omega \Big(T(\nabla f, \nabla {\bf u}_i)+\frac{1}{2}\mathscr{L}f{\bf u}_i - \sum_{j=1}^kb_{ij}{\bf u}_j\Big)\cdot {\bf v}_idm \Bigg\} \\
    \leq& 2(\sigma_{k+1}-\sigma_i)^2\|{\bf v}_i\|\Big\|T(\nabla f, \nabla {\bf u}_i)+\frac{1}{2}\mathscr{L}f{\bf u}_i - \sum_{j=1}^kb_{ij}{\bf u}_j\Big\|\\
    \leq& (\sigma_{k+1}-\sigma_i)^3B\|{\bf v}_i\|^2 + \frac{\sigma_{k+1}-\sigma_i}{B}\Big\|T(\nabla f, \nabla {\bf u}_i )+\frac{1}{2}\mathscr{L}f{\bf u}_i - \sum_{j=1}^kb_{ij}{\bf u}_j\Big\|^2,
\end{align*}
hence, using \eqref{eq15} and \eqref{eq16}, we obtain
\begin{align*}
    (\sigma_{k+1}&-\sigma_i)^2\Big( \int_\Omega T(\nabla f, \nabla f)|{\bf u}_i|^2dm + 2\sum_{j=1}^ka_{ij}b_{ij}\Big)\\
    \leq& (\sigma_{k+1}-\sigma_i)^2B\Big(\int_\Omega|{\bf u}_i|^2T(\nabla f, \nabla f)dm + \sum_{j=1}^k(\sigma_i-\sigma_j)a_{ij}^2
    +\alpha \int_\Omega |\nabla f \cdot {\bf u}_i|^2dm\Big)\\
      &+\frac{\sigma_{k+1}-\sigma_i}{B} \Big( \Big\|T(\nabla f, \nabla {\bf u}_i)+\frac{1}{2}\mathscr{L}f{\bf u}_i\Big\|^2 - \sum_{j=1}^kb_{ij}^2 \Big).
\end{align*}
Summing over $i$ from $1$ to $k$, we obtain
\begin{align}\label{eq19}
    \sum_{i=1}^k&(\sigma_{k+1}-\sigma_i)^2\Big( \int_\Omega T(\nabla f, \nabla f)|{\bf u}_i|^2dm + 2\sum_{j=1}^ka_{ij}b_{ij}\Big) \nonumber\\
    \leq& \sum_{i=1}^k(\sigma_{k+1}-\sigma_i)^2B\Big(\int_\Omega|{\bf u}_i|^2T(\nabla f, \nabla f)dm + \sum_{j=1}^k(\sigma_i-\sigma_j)a_{ij}^2
    +\alpha \int_\Omega |\nabla f\cdot {\bf u}_i|^2dm\Big) \nonumber\\
      &+\sum_{i=1}^k\frac{\sigma_{k+1}-\sigma_i}{B}\Big(\Big\|T(\nabla f, \nabla{\bf u}_i)+\frac{1}{2}\mathscr{L}f{\bf u}_i\Big\|^2 - \sum_{j=1}^kb_{ij}^2\Big).
\end{align}
Since $a_{ij}=a_{ji}$ and $b_{ij}=-b_{ji}$, we have
\begin{align*}
    2\sum_{i,j=1}^k(\sigma_{k+1}-\sigma_i)^2a_{ij}b_{ij}=-2\sum_{i,j=1}^k(\sigma_{k+1}-\sigma_i)(\sigma_i-\sigma_j)a_{ij}b_{ij},
\end{align*}
and
\begin{align*}
    \sum_{i,j=1}^k(\sigma_{k+1}-\sigma_i)^2(\sigma_i-\sigma_j)a_{ij}^2=-\sum_{i,j=1}^k(\sigma_{k+1}-\sigma_i)(\sigma_i-\sigma_j)^2a_{ij}^2.
\end{align*}
Therefore, from \eqref{eq19}, we get
\begin{align*}
   \sum_{i=1}^k &(\sigma_{k+1}-\sigma_i)^2 \int_\Omega T(\nabla f, \nabla f)|{\bf u}_i|^2dm \nonumber\\
   \leq & \sum_{i=1}^k(\sigma_{k+1}-\sigma_i)^2B\Big(\int_\Omega|{\bf u}_i|^2T(\nabla f, \nabla f)dm +\alpha \int_\Omega |\nabla f\cdot {\bf u}_i|^2dm\Big) \nonumber\\
      &+\sum_{i=1}^k\frac{\sigma_{k+1}-\sigma_i}{B}\Big\|T(\nabla f, \nabla {\bf u}_i)+\frac{1}{2}\mathscr{L}f{\bf u}_i\Big\|^2,
\end{align*}
whence
\begin{align*}
    &\sum_{i=1}^k(\sigma_{k+1}-\sigma_i)^2 \Big( (1-B)\int_\Omega T(\nabla f, \nabla f)|{\bf u}_i|^2dm-B\alpha \int_\Omega |\nabla f \cdot {\bf u}_i|^2dm \Big) \nonumber\\
    &\leq \frac{1}{B}\sum_{i=1}^k{(\sigma_{k+1}-\sigma_i)}\Big\|T(\nabla f, \nabla {\bf u}_i)+\frac{1}{2}\mathscr{L}f{\bf u}_i\Big\|^2.
\end{align*}
This finishes the proof of Lemma \ref{lema1}.
\end{proof}
The proof of the next lemma follows the steps of the proof of Proposition~2 in Gomes and Miranda~\cite{GomesMiranda} with appropriate adaptations for vector-valued functions from $\Omega$ to $\mathbb{R}^n$.
\begin{lem}\label{lema2}
Let $\Omega\subset\mathbb{R}^n$ be a bounded domain, $\sigma_i$ be the $i$-th eigenvalue of Problem~\eqref{problem1} and ${\bf u}_i$ be a normalized vector-valued eigenfunction corresponding to $\sigma_i$. Then, for any positive integer $k$, we get
\begin{align*}
     \sum_{i=1}^k(\sigma_{k+1}-&\sigma_i)^2 \leq \frac{4(n\delta+\alpha)}{n^2\varepsilon^2} \sum_{i=1}^k(\sigma_{k+1}-\sigma_i)\Big\{\frac{1}{4}\int_{\Omega}|{\bf u}_i|^2|\tr(\nabla T)-T(\nabla \eta)|^2dm \nonumber\\
    &+ \int_\Omega {\bf u}_i\cdot{\Big (} T(\tr(\nabla T), \nabla {\bf u}_i) - T(T(\nabla \eta), \nabla {\bf u}_i){\Big )}dm + \|T(\nabla {\bf u}_i)\|^2\Big\}.
\end{align*}
\end{lem}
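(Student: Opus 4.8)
The plan is to specialize Lemma~\ref{lema1} to the coordinate functions and sum. For each $\beta=1,\dots,n$ I would apply Lemma~\ref{lema1} with $f=x_\beta$, the $\beta$-th coordinate function (which lies in $C^2(\Omega)\cap C^1(\partial\Omega)$), keeping the constant $B>0$ free for now. Since $\nabla x_\beta=e_\beta$, the ingredients simplify: $T(\nabla x_\beta,\nabla x_\beta)=T(e_\beta,e_\beta)$, $\nabla x_\beta\cdot{\bf u}_i=u_i^\beta$, the $j$-th entry of $T(\nabla x_\beta,\nabla{\bf u}_i)$ is $\langle e_\beta,T(\nabla u_i^j)\rangle$, and, directly from \eqref{eq1.1}, $\mathscr{L}x_\beta=\dv(T(e_\beta))-\langle\nabla\eta,T(e_\beta)\rangle=\big(\tr(\nabla T)-T(\nabla\eta)\big)_\beta$. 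Getting this last identity right is the crux, since it is what forces the vector field $\tr(\nabla T)-T(\nabla\eta)$ to appear in the final bound.

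Next I would sum the resulting inequalities over $\beta=1,\dots,n$. On the left, $\sum_\beta T(e_\beta,e_\beta)=\tr(T)$ and $\sum_\beta (u_i^\beta)^2=|{\bf u}_i|^2$, so the bracketed factor becomes $(1-B)\int_\Omega\tr(T)|{\bf u}_i|^2\,dm-B\alpha$, where I use the normalization $\int_\Omega|{\bf u}_i|^2\,dm=1$. On the right, I would expand $\big|T(\nabla x_\beta,\nabla{\bf u}_i)+\tfrac12\mathscr{L}x_\beta\,{\bf u}_i\big|^2$ and sum over $\beta$; the diagonal terms assemble into $\|T(\nabla{\bf u}_i)\|^2$ (by \eqref{norm-Tu}) and $\tfrac14\int_\Omega|{\bf u}_i|^2|\tr(\nabla T)-T(\nabla\eta)|^2\,dm$, while the cross term becomes $\sum_j u_i^j\langle T(\nabla u_i^j),\tr(\nabla T)-T(\nabla\eta)\rangle$. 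Using the symmetry of $T$ as recorded in \eqref{definition}, namely $\langle T(\nabla u_i^j),X\rangle=T(X,\nabla u_i^j)$ with $X=\tr(\nabla T)$ and $X=T(\nabla\eta)$, this cross term collapses to $\int_\Omega{\bf u}_i\cdot\big(T(\tr(\nabla T),\nabla{\bf u}_i)-T(T(\nabla\eta),\nabla{\bf u}_i)\big)\,dm$, which is exactly the quantity in braces in the statement.

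Finally I would bound the left coefficient and optimize in $B$. From $\varepsilon I\leq T$ we get $\tr(T)\geq n\varepsilon$, so for $0<B<1$ the bracketed factor is at least $(1-B)n\varepsilon-B\alpha$; after dividing, the coefficient in front of the right-hand side is $\big(B[(1-B)n\varepsilon-B\alpha]\big)^{-1}$. The function $B\mapsto B[(1-B)n\varepsilon-B\alpha]=n\varepsilon B-(n\varepsilon+\alpha)B^2$ is maximized at $B=\frac{n\varepsilon}{2(n\varepsilon+\alpha)}\in(0,1)$, with maximum $\frac{n^2\varepsilon^2}{4(n\varepsilon+\alpha)}$, giving the coefficient $\frac{4(n\varepsilon+\alpha)}{n^2\varepsilon^2}$. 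Since every brace is nonnegative and $\varepsilon\leq\delta$, this immediately yields the asserted inequality with the (slightly weaker) constant $\frac{4(n\delta+\alpha)}{n^2\varepsilon^2}$.

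The conceptual content is entirely in the first two steps; the genuine obstacle is bookkeeping. One must handle two indices simultaneously---$\beta$ running over coordinate directions and $j$ running over the components of the vector eigenfunction ${\bf u}_i$---and verify that, after summing in $\beta$, the cross term reorganizes into the coordinate-free expression $T(\tr(\nabla T),\nabla{\bf u}_i)-T(T(\nabla\eta),\nabla{\bf u}_i)$ rather than into a less symmetric quantity. The identity $\mathscr{L}x_\beta=(\tr(\nabla T)-T(\nabla\eta))_\beta$ together with the symmetry relations for $T$ are precisely what make this reorganization work.
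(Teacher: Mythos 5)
Your proposal is correct and follows essentially the same route as the paper: apply Lemma~\ref{lema1} with $f=x_\beta$, sum over $\beta$, use $\mathscr{L}x_\beta=(\tr(\nabla T)-T(\nabla\eta))_\beta$ together with the symmetry of $T$ to assemble the coordinate-free terms, and then optimize in $B$. The only (harmless) deviation is in the treatment of the left-hand factor: you keep only $\tr(T)\geq n\varepsilon$ with $0<B<1$ and thereby get the sharper constant $\tfrac{4(n\varepsilon+\alpha)}{n^2\varepsilon^2}$, whereas the paper also bounds $\tr(T)\leq n\delta$ on the term multiplied by $-B$ to reach $n\varepsilon-(n\delta+\alpha)B$ directly; your final weakening to the stated constant is legitimate because each brace equals $\sum_\beta\big\|T(\nabla x_\beta,\nabla{\bf u}_i)+\tfrac{1}{2}\mathscr{L}x_\beta\,{\bf u}_i\big\|^2\geq 0$ and $\sigma_{k+1}-\sigma_i\geq 0$.
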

\begin{proof} Let $\{x_\beta\}_{\beta=1}^n$ be the coordinate functions of $\mathbb{R}^n$, then taking $f=x_\beta$ in Lemma~\ref{lema1} and summing over $\beta$ from 1 to $n$, we get
\begin{align*}
    \sum_{i=1}^k(\sigma_{k+1}-&\sigma_i)^2\sum_{\beta=1}^n\Bigg\{ (1-B)\int_{\Omega}T(\nabla x_\beta, \nabla x_\beta)|{\bf u}_i|^2dm -B\alpha \int_{\Omega}|\nabla x_\beta \cdot {\bf u}_i|^2dm \Bigg\}\nonumber\\
   \leq& \frac{1}{B}\sum_{i=1}^k(\sigma_{k+1}-\sigma_i)\sum_{\beta=1}^n\Big\|T(\nabla x_\beta, \nabla {\bf u}_i) + \frac{1}{2}\mathscr{L}x_\beta {\bf u}_i\Big\|^2\nonumber\\
   =& \frac{1}{B}\sum_{i=1}^k(\sigma_{k+1}-\sigma_i)\int_\Omega\sum_{\beta=1}^n\Big|T(\nabla x_\beta, \nabla {\bf u}_i) + \frac{1}{2}\dv_\eta (T(\nabla x_\beta )){\bf u}_i\Big|^2dm.
\end{align*}
Therefore, 
\begin{align}\label{eq4.17}
    \sum_{i=1}^k(\sigma_{k+1}-&\sigma_i)^2\sum_{\beta=1}^n\Bigg\{ (1-B)\int_{\Omega}T(\nabla x_\beta, \nabla x_\beta)|{\bf u}_i|^2dm -B\alpha \int_{\Omega}|\nabla x_\beta \cdot {\bf u}_i|^2dm \Bigg\}\nonumber\\
   \leq& \frac{1}{B}\sum_{i=1}^k(\sigma_{k+1}-\sigma_i)\int_{\Omega}\sum_{\beta=1}^n\Bigg\{ \frac{|{\bf u}_i|^2}{4}(\dv_\eta (T(\nabla x_\beta)))^2\nonumber\\
   & + {\bf u}_i \cdot \Big(\dv_\eta (T(\nabla x_\beta))T(\nabla x_\beta, \nabla {\bf u}_i)\Big) + |T(\nabla x_\beta, \nabla {\bf u}_i)|^2  {\Bigg \}}dm.
\end{align}
By straightforward computation, we have
\begin{align*}
      \quad \quad \sum_{\beta=1}^nT(\nabla x_\beta, \nabla x_\beta)=\sum_{\beta=1}^n\langle T(e_\beta), e_\beta \rangle = \tr(T) \quad \mbox{and} \quad \sum_{\beta=1}^n|\nabla x_\beta \cdot {\bf u}_i|^2=|{\bf u}_i|^2.
\end{align*} 
Similarly to the calculations in~\cite[Eq.~(3.23)]{GomesMiranda}, we obtain
\begin{align*}
    \sum_{\beta=1}^n (\dv_\eta (T(\nabla x_\beta)))^2=|\tr(\nabla T) - T(\nabla \eta)|^2.
\end{align*}
From  ~\cite[Eq.~(3.24)]{GomesMiranda}, for all $k= 1, \ldots, n$, we get
\begin{equation*}
    \sum_{\beta=1}^n\dv_\eta (T(\nabla x_\beta))T(\nabla x_\beta, \nabla u_i^k) = \langle \tr(\nabla T), T(\nabla u_i^k)\rangle - \langle T(\nabla \eta), T(\nabla u_i^k) \rangle, 
\end{equation*}
then, using \eqref{definition}, we obtain
\begin{align*}
    &\sum_{\beta=1}^n\dv_\eta (T(\nabla x_\beta)) T(\nabla x_\beta, \nabla {\bf u}_i)=\\ 
    &=\Big(\sum_{\beta=1}^n\dv_\eta (T(\nabla x_\beta))T(\nabla x_\beta, \nabla u_i^1) , \ldots, \sum_{\beta=1}^n\dv_\eta (T(\nabla x_\beta))T(\nabla x_\beta, \nabla u_i^n)\Big)\\
    &=(\langle \tr(\nabla T), T(\nabla u_i^1)\rangle - \langle T(\nabla \eta), T(\nabla u_i^1) \rangle, \ldots, \langle \tr(\nabla T), T(\nabla u_i^n)\rangle - \langle T(\nabla \eta), T(\nabla u_i^n) \rangle)\\
    &=(\langle \tr(\nabla T), T(\nabla u_i^1)\rangle, \ldots, \langle \tr(\nabla T), T(\nabla u_i^n)\rangle) - (\langle T(\nabla \eta), T(\nabla u_i^1) \rangle, \ldots, \langle T(\nabla \eta), T(\nabla u_i^n) \rangle)\\
    &=T(\tr(\nabla T), \nabla {\bf u}_i) - T(T(\nabla \eta), \nabla {\bf u}_i).
\end{align*}
Moreover,
\begin{align*}
    \sum_{\beta=1}^n&|T(\nabla x_\beta, \nabla {\bf u}_i)|^2= \sum_{\beta=1}^n | T(e_\beta, \nabla {\bf u}_i)|^2 = \sum_{\beta=1}^n  |\big(\langle e_\beta, T(\nabla u_i^1)\rangle, \ldots, \langle e_\beta, T(\nabla u_i^n)\rangle \big)|^2 \\
    &=\sum_{\beta=1}^n \langle e_\beta, T(\nabla u_i^1)\rangle^2 + \cdots + \sum_{\beta=1}^n \langle e_\beta, T(\nabla u_i^n)\rangle^2
    =\sum_{j=1}^n|T(\nabla  u_i^j)|^2 = |T(\nabla {\bf u}_i)|^2.
\end{align*}
Substituting the previous equalities into \eqref{eq4.17} and remembering that $\|T (\nabla {\bf u}_i)\|^2 = \int_\Omega |T(\nabla {\bf u}_i)|^2 dm$, we get
\begin{align}\label{eq4.18}
   \sum_{i=1}^k& (\sigma_{k+1}-\sigma_i)^2 \Big[(1-B)\int_\Omega \tr(T)|{\bf u}_i|^2dm - B\alpha\Big]\nonumber\\
   \leq &\frac{1}{B}\sum_{i=1}^k(\sigma_{k+1}-\sigma_i){\Bigg \{}\frac{1}{4}\int_{\Omega}|{\bf u}_i|^2|\tr(\nabla T)-T(\nabla \eta)|^2 dm\nonumber\\
   &+ \int_\Omega{\bf u}_i\cdot{\Big (}T(\tr(\nabla T), \nabla {\bf u}_i) - T(T(\nabla \eta), \nabla {\bf u}_i){\Big )}dm + \|T (\nabla {\bf u}_i)\|^2  {\Bigg \}}.
\end{align}
Since $\varepsilon I \leq T \leq \delta I$, then $n\varepsilon \leq \tr(T) \leq n\delta$. Hence from \eqref{eq4.18} 
\begin{align}\label{equation4.18}
   \sum_{i=1}^k& (\sigma_{k+1}-\sigma_i)^2 [n\varepsilon - (n\delta+\alpha)B]\nonumber\\
   \leq &\frac{1}{B}\sum_{i=1}^k(\sigma_{k+1}-\sigma_i){\Bigg \{}\frac{1}{4}\int_{\Omega}|{\bf u}_i|^2|\tr(\nabla T)-T(\nabla \eta)|^2 dm\nonumber\\
   &+ \int_\Omega{\bf u}_i\cdot{\Big (}T(\tr(\nabla T), \nabla {\bf u}_i) - T(T(\nabla \eta), \nabla {\bf u}_i){\Big )}dm + \|T (\nabla {\bf u}_i)\|^2  {\Bigg \}}.
\end{align}
Furthermore, since $B$ is arbitrary positive constant, putting 
\begin{align*}
    B=&{\Big \{}\frac{\sum_{i=1}^k(\sigma_{k+1}-\sigma_i)}{{(n\delta+\alpha)\sum_{i=1}^k(\sigma_{k+1}-\sigma_i)^2}}{\Big [} \frac{1}{4}\int_{\Omega}|{\bf u}_i|^2|\tr(\nabla T)-T(\nabla \eta)|^2 dm   \nonumber\\
    &+ \int_\Omega{\bf u}_i\cdot (T(\tr(\nabla T), \nabla {\bf u}_i) - T(T(\nabla \eta), \nabla {\bf u}_i))dm + \|T (\nabla {\bf u}_i)\|^2{\Big ]}{\Big \}}^{\frac{1}{2}}
\end{align*}
into \eqref{equation4.18}, we obtain the required inequality and complete the proof of Lemma~\ref{lema2}.
\end{proof}

With these considerations in mind, we can rewrite the previous lemma in a more convenient way for us.
\begin{lem}\label{lemma2.3}
Under the same setup as in Lemma~\ref{lema2}, we get
\begin{align*}
     \sum_{i=1}^k&(\sigma_{k+1}-\sigma_i)^2 \leq \frac{4(n\delta+\alpha)}{n^2\varepsilon^2} \sum_{i=1}^k(\sigma_{k+1}-\sigma_i)\Big\{\|T(\nabla {\bf u}_i)\|^2 + C \\  
     &+\frac{1}{4}\int_{\Omega}|{\bf u}_i|^2\langle \tr(\nabla T), \tr(\nabla T) - 2T(\nabla \eta) \rangle dm +\int_{\Omega}{\bf u}_i \cdot T(\tr(\nabla T), \nabla {\bf u}_i) dm
   \Big\},
\end{align*}
where $C=\sup_{\Omega}\big\{\frac{1}{2}\dv (T^2(\nabla \eta)) - \frac{1}{4}|T(\nabla \eta)|^2\big\}$ has been chosen such that the term on the right-hand side must be positive.
\end{lem}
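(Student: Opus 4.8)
The plan is to show that the expression inside the braces in Lemma~\ref{lema2} is bounded above by the expression inside the braces in Lemma~\ref{lemma2.3}; since each weight $\sigma_{k+1}-\sigma_i\ge 0$ for $i\le k$, this converts the inequality of Lemma~\ref{lema2} into the one asserted here. First I would expand the square in the first summand of Lemma~\ref{lema2},
\[
|\tr(\nabla T)-T(\nabla \eta)|^2 = \langle \tr(\nabla T),\,\tr(\nabla T)-2T(\nabla\eta)\rangle + |T(\nabla\eta)|^2,
\]
which immediately produces the term $\tfrac14\int_\Omega|{\bf u}_i|^2\langle\tr(\nabla T),\tr(\nabla T)-2T(\nabla\eta)\rangle\,dm$ appearing in Lemma~\ref{lemma2.3}, together with a leftover $\tfrac14\int_\Omega|{\bf u}_i|^2|T(\nabla\eta)|^2\,dm$. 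Matching the terms $\int_\Omega{\bf u}_i\cdot T(\tr(\nabla T),\nabla{\bf u}_i)\,dm$ and $\|T(\nabla{\bf u}_i)\|^2$ directly, it then remains only to absorb the two residual terms
\[
\tfrac14\int_\Omega|{\bf u}_i|^2|T(\nabla\eta)|^2\,dm \;-\; \int_\Omega{\bf u}_i\cdot T(T(\nabla\eta),\nabla{\bf u}_i)\,dm
\]
into the constant $C$.

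The heart of the argument is a weighted integration by parts applied to the cross term. Using the definition~\eqref{definition} and the symmetry of $T$, each component reads $T(T(\nabla\eta),\nabla u_i^j)=\langle T^2(\nabla\eta),\nabla u_i^j\rangle$, so that
\[
{\bf u}_i\cdot T(T(\nabla\eta),\nabla{\bf u}_i)=\Big\langle T^2(\nabla\eta),\sum_{j=1}^n u_i^j\nabla u_i^j\Big\rangle=\tfrac12\big\langle T^2(\nabla\eta),\nabla|{\bf u}_i|^2\big\rangle.
\]
Since $|{\bf u}_i|^2$ vanishes on $\partial\Omega$, formulas~\eqref{property1} and \eqref{property2} give
\[
\int_\Omega \tfrac12\big\langle T^2(\nabla\eta),\nabla|{\bf u}_i|^2\big\rangle\,dm = -\tfrac12\int_\Omega |{\bf u}_i|^2\,\dv_\eta\!\big(T^2(\nabla\eta)\big)\,dm,
\]
and expanding $\dv_\eta(T^2(\nabla\eta))=\dv(T^2(\nabla\eta))-|T(\nabla\eta)|^2$ (again by symmetry of $T$) turns the two residual terms into
\[
\int_\Omega|{\bf u}_i|^2\Big(\tfrac12\dv\big(T^2(\nabla\eta)\big)-\tfrac14|T(\nabla\eta)|^2\Big)\,dm,
\]
where the coefficients $\tfrac14-\tfrac12=-\tfrac14$ of $|T(\nabla\eta)|^2$ collapse as expected.

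Finally I would invoke the normalization $\int_\Omega|{\bf u}_i|^2\,dm=1$ to bound this last integral by $\sup_\Omega\{\tfrac12\dv(T^2(\nabla\eta))-\tfrac14|T(\nabla\eta)|^2\}=C$, completing the passage from Lemma~\ref{lema2} to Lemma~\ref{lemma2.3}. Positivity of the right-hand bracket is then automatic: the bracket of Lemma~\ref{lema2} is an integral of a sum of squares (it arose from $\sum_\beta|T(\nabla x_\beta,\nabla{\bf u}_i)+\tfrac12\mathscr{L}x_\beta{\bf u}_i|^2$ in the previous proof), and replacing its last piece by the supremum $C$ only enlarges it. I expect the only delicate points to be the vanishing of the boundary term in the integration by parts and the careful tracking of the factors $\tfrac14$ versus $\tfrac12$ and of the signs through $\dv_\eta$; the rest is algebraic bookkeeping.
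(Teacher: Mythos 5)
Your proposal is correct and follows essentially the same route as the paper: expand $|\tr(\nabla T)-T(\nabla\eta)|^2$, convert the cross term $-\int_\Omega{\bf u}_i\cdot T(T(\nabla\eta),\nabla{\bf u}_i)\,dm$ via the identity ${\bf u}_i\cdot T(T(\nabla\eta),\nabla{\bf u}_i)=\tfrac12\langle T^2(\nabla\eta),\nabla|{\bf u}_i|^2\rangle$ and weighted integration by parts into $\tfrac12\int_\Omega|{\bf u}_i|^2\dv_\eta(T^2(\nabla\eta))\,dm$, and then use the normalization of ${\bf u}_i$ to bound the resulting integral by $C=\sup_\Omega\{\tfrac12\dv(T^2(\nabla\eta))-\tfrac14|T(\nabla\eta)|^2\}$. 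Your closing remark on positivity (the bracket dominates the sum of squares $\sum_\beta\|T(\nabla x_\beta,\nabla{\bf u}_i)+\tfrac12\mathscr{L}x_\beta{\bf u}_i\|^2$ from Lemma~\ref{lema2}) is also exactly how the paper justifies it when proving Theorem~\ref{theorem1.1}.
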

\begin{proof} We make use of Lemma~\ref{lema2}. For this, we must notice that
\begin{align*}
    |\tr(\nabla T)-T(\nabla \eta)|^2 = |\tr(\nabla T)|^2 - 2\langle \tr(\nabla T), T(\nabla \eta) \rangle + |T(\nabla \eta)|^2,
\end{align*}
hence
\begin{align}\label{eqq2.23}
     \frac{1}{4}&\int_{\Omega}|{\bf u}_i|^2|\tr(\nabla T)-T(\nabla \eta)|^2 dm + \int_\Omega{\bf u}_i \cdot (T(\tr(\nabla T), \nabla {\bf u}_i) -  T(T(\nabla \eta), \nabla {\bf u}_i)))dm \nonumber \\
     =&\frac{1}{4}\int_{\Omega}|{\bf u}_i|^2|T(\nabla \eta)|^2 dm -\int_\Omega{\bf u}_i \cdot T( T(\nabla \eta), \nabla {\bf u}_i) dm  +\frac{1}{4}\int_{\Omega}|{\bf u}_i|^2|\tr(\nabla T)|^2dm\nonumber\\
     &- \frac{1}{2}\int_\Omega|{\bf u}_i|^2\langle \tr(\nabla T), T (\nabla \eta)\rangle dm +\int_\Omega{\bf u}_i \cdot T(\tr(\nabla T), \nabla {\bf u}_i)dm.
\end{align}
Since ${\bf u}_i|_{\partial \Omega}=0$ by Eq.~\eqref{definition} and the divergence theorem, we have
\begin{align*}
    -&\int_\Omega{\bf u}_i \cdot T(T(\nabla \eta),\nabla {\bf u}_i) dm\\
    &= -\int_\Omega u_i^1 \langle T^2(\nabla \eta), \nabla u_i^1\rangle dm - \cdots  -\int_\Omega u_i^n \langle T^2(\nabla \eta), \nabla u_i^n\rangle dm \\
   &= -\frac{1}{2}\int_\Omega \langle T^2(\nabla \eta), \nabla (u_i^1)^2\rangle dm - \cdots  -\frac{1}{2}\int_\Omega \langle T^2(\nabla \eta), \nabla (u_i^n)^2\rangle dm \\
    &=\frac{1}{2}\int_\Omega (u_i^1)^2 \dv_\eta (T^2(\nabla \eta))dm + \cdots + \frac{1}{2}\int_\Omega (u_i^n)^2 \dv_\eta (T^2(\nabla \eta))dm \\
    &=\frac{1}{2}\int_\Omega |{\bf u}_i|^2 \dv_\eta (T^2(\nabla \eta))dm.
\end{align*}
Substituting the previous equation in Eq. \eqref{eqq2.23}, we get
\begin{align*}
     \frac{1}{4}&\int_{\Omega}|{\bf u}_i|^2|\tr(\nabla T)-T(\nabla \eta)|^2 dm + \int_\Omega{\bf u}_i\cdot (T(\tr(\nabla T), \nabla {\bf u}_i) - T( T(\nabla \eta), \nabla {\bf u}_i))dm \nonumber \\
     =&\int_{\Omega}|{\bf u}_i|^2{\Big(}\frac{1}{4}|T(\nabla \eta)|^2 +\frac{1}{2}\dv_\eta (T^2(\nabla \eta)){\Big)}dm   +\frac{1}{4}\int_{\Omega}|{\bf u}_i|^2\langle \tr(\nabla T), \tr(\nabla T) \rangle dm\nonumber\\
     &- \frac{1}{2}\int_\Omega|{\bf u}_i|^2\langle \tr(\nabla T), T (\nabla \eta)\rangle dm +\int_\Omega{\bf u}_i \cdot T(\tr(\nabla T), \nabla {\bf u}_i) dm.
\end{align*}
By setting 
\begin{equation*}
    C=\sup_{\Omega}\Big\{\frac{1}{4}|T(\nabla \eta)|^2 +\frac{1}{2}\dv_\eta (T^2(\nabla \eta))\Big\} = \sup_{\Omega}\Big\{\frac{1}{2}\dv (T^2(\nabla \eta)) - \frac{1}{4}|T(\nabla \eta)|^2\Big\}
\end{equation*}
and by the previous equality, we have
\begin{align}\label{eqq2.24}
     \frac{1}{4}&\int_{\Omega}|{\bf u}_i|^2|\tr(\nabla T)-T(\nabla \eta)|^2 dm + \int_\Omega{\bf u}_i\cdot (T(\tr(\nabla T), \nabla {\bf u}_i) - T( T(\nabla \eta), \nabla {\bf u}_i)))dm \nonumber \\
     &\leq C +\frac{1}{4}\int_{\Omega}|{\bf u}_i|^2\langle \tr(\nabla T), \tr(\nabla T) -2 T(\nabla \eta) \rangle dm +\int_\Omega{\bf u}_i\cdot T(\tr(\nabla T), \nabla {\bf u}_i) dm.
\end{align}
Replacing Inequality~\eqref{eqq2.24} into Lemma~\ref{lema2}, we complete the proof of Lemma~\ref{lemma2.3}.
\end{proof}

Now, we are in a position to give the proof of the two theorems of this paper. 

\section{Proof of Theorems~\ref{theorem1.1} and \ref{theorem1.2}}

\subsection{Proof of Theorem~\ref{theorem1.1}}
\begin{proof}
The proof is a consequence of Lemma~\ref{lemma2.3}. We begin by computing 
\begin{align*}
    \frac{1}{4}\int_{\Omega}|{\bf u}_i|^2\langle \tr(\nabla T), \tr(\nabla T) -2 T(\nabla \eta) \rangle dm =& \frac{1}{4} \int_{\Omega} |{\bf u}_i|^2 | \tr(\nabla T)|^2dm \\
    &- \frac{1}{2}\int_{\Omega}|{\bf u}_i|^2\langle \tr(\nabla T), T(\nabla \eta) \rangle dm.
\end{align*}
Since $T_0=\sup_{\Omega}|\tr(\nabla T)|$ and $\eta_0=\sup_{\Omega}|\nabla \eta|$, we have
\begin{align*}
    \frac{1}{4} \int_{\Omega} |{\bf u}_i|^2 | \tr(\nabla T)|^2dm \leq \frac{1}{4}T_0^2\int_\Omega |{\bf u}_i|^2 dm =\frac{1}{4} T_0^2,
\end{align*}
and using \eqref{T-norm} we get
\begin{align*}
    -\frac{1}{2}\int_{\Omega}|{\bf u}_i|^2\langle \tr(\nabla T), T(\nabla \eta) \rangle dm & \leq \frac{1}{2}\int_{\Omega}|{\bf u}_i|^2|\tr(\nabla T)| |T(\nabla \eta)| dm \nonumber\\
    &\leq \frac{\delta}{2}\int_{\Omega}|{\bf u}_i|^2|\tr(\nabla T)||\nabla \eta| dm \nonumber\\
    &\leq \frac{\delta}{2} T_0\eta_0.
\end{align*}
Then, 
\begin{eqnarray}\label{eq2.29}
    \frac{1}{4}\int_{\Omega}|{\bf u}_i|^2\langle \tr(\nabla T), \tr(\nabla T) -2 T(\nabla \eta) \rangle dm \leq \frac{1}{4} T_0^2 + \frac{\delta}{2} T_0\eta_0.
\end{eqnarray}
Furthermore,
\begin{align}\label{eqq31}
    \int_\Omega {\bf u}_i \cdot T(\tr(\nabla T), \nabla {\bf u}_i) dm &\leq \Big(\int_\Omega |{\bf u}_i|^2dm \Big)^{\frac{1}{2}} \Big(\int_\Omega |T(\tr(\nabla T), \nabla {\bf u}_i)|^2dm \Big)^{\frac{1}{2}} \nonumber\\
    & \leq T_0 \Big(\int_\Omega |T(\nabla {\bf u}_i)|^2 dm\Big)^\frac{1}{2} = T_0\|T(\nabla {\bf u}_i)\|.
\end{align}
Substituting \eqref{eq2.29} and \eqref{eqq31} into Lemma~\ref{lemma2.3}, we obtain
\begin{align*}
     \sum_{i=1}^k(\sigma_{k+1}-\sigma_i)^2 \leq & \frac{4(n\delta+\alpha)}{n^2\varepsilon^2} \sum_{i=1}^k(\sigma_{k+1}-\sigma_i)\Big\{\|T(\nabla {\bf u}_i)\|^2  + \frac{1}{4}T_0^2 + T_0\|T(\nabla {\bf u}_i)\|\\   & +\frac{\delta}{2} T_0\eta_0+ C \Big\}.
\end{align*}
 Moreover, from the proof of Lemma~\ref{lema2} and Lemma~\ref{lemma2.3}, we can see that 
\begin{align*}
   0 < \sum_{\beta=1}^n\Big\|T(\nabla x_\beta, \nabla {\bf u}_i) + \frac{1}{2}\mathscr{L}x_\beta {\bf u}_i\Big\|^2 \leq& \Big\{\|T(\nabla {\bf u}_i)\|^2  + \frac{1}{4}T_0^2 + T_0\|T(\nabla {\bf u}_i)\|\\ 
   &+\frac{\delta}{2} T_0\eta_0+ C
   \Big\}\\
  =&\Big(\|T(\nabla {\bf u}_i)\|  + \frac{1}{2}T_0 \Big)^2 +C_0,
\end{align*}
where $C_0=\frac{\delta}{2} T_0\eta_0+ C$ and $\{x_\beta\}_{\beta=1}^n$ are the canonical coordinate functions of $\mathbb{R}^n$. Thus, we get
\begin{align}\label{equation5-3}
     \sum_{i=1}^k(\sigma_{k+1}-\sigma_i)^2 \leq \frac{4(n\delta+\alpha)}{n^2\varepsilon^2} \sum_{i=1}^k(\sigma_{k+1}-\sigma_i)\Big\{\Big(\|T(\nabla {\bf u}_i)\|  + \frac{1}{2}T_0 \Big)^2 +C_0\Big\}.
\end{align}
From \eqref{problem1}, \eqref{parts} and \eqref{divformula} we obtain 
\begin{equation*}
    \sigma_i=\int_{\Omega}T(\nabla {\bf u}_i) \cdot \nabla {\bf u}_i dm + \alpha \|\dv_\eta{\bf u}_i\|^2.
\end{equation*}
Since there exist positive real numbers $\varepsilon$ and $\delta$ such that $\varepsilon I \leq T \leq \delta I$, from the previous inequality and \eqref{T-property}, we get
\begin{equation}\label{equation5-4}
    \|T(\nabla {\bf u}_i)\|^2 \leq \delta  \int_{\Omega}T(\nabla {\bf u}_i) \cdot \nabla {\bf u}_i dm = \delta(\sigma_i - \alpha \|\dv_\eta{\bf u}_i\|^2).
\end{equation}
Therefore, from \eqref{equation5-3} and \eqref{equation5-4} we obtain
\begin{equation*}
     \sum_{i=1}^k(\sigma_{k+1}-\sigma_i)^2 \leq \frac{4(n\delta+\alpha)}{n^2\varepsilon^2} \sum_{i=1}^k(\sigma_{k+1}-\sigma_i)\Big\{\Big[\sqrt{\delta}(\sigma_i - \alpha \|\dv_\eta{\bf u}_i\|^2)^{\frac{1}{2}}  + T_0 \Big]^2 +C_0\Big\},
\end{equation*}
which is enough to complete the proof.
\end{proof}

\subsection{Proof of Theorem~\ref{theorem1.2}}
\begin{proof}
Let $\{x_\beta \}_{\beta=1}^n$ be the standard coordinate functions of $\mathbb{R}^n$. Let us consider the matrix $D=(d_{ij})_{n \times n}$ where
\begin{equation*}
    d_{ij}:= \int_\Omega x_i{\bf u}_1\cdot {\bf u}_{j+1} dm.
\end{equation*}
Using the orthogonalization of Gram and Schmidt, we know that there exists an upper triangle matrix $R=(r_{ij})_{n \times n}$ and an orthogonal matrix $S=(s_{ij})_{n \times n}$ such that $R=SD$, namely
\begin{equation*}
    r_{ij}=\sum_{k=1}^ns_{ik}d_{kj}= \sum_{k=1}^n s_{ik} \int_\Omega x_k{\bf u}_1\cdot {\bf u}_{j+1} dm = \int_\Omega \Big( \sum_{k=1}^ns_{ik}x_k\Big){\bf u}_1\cdot {\bf u}_{j+1} dm = 0,
\end{equation*}
for $1 \leq j < i \leq n$. Putting $y_i=\sum_{k=1}^ns_{ik}x_k$, we have 
\begin{equation*}
    \int_\Omega y_i{\bf u}_1\cdot {\bf u}_{j+1} dm = 0   \quad \mbox{for} \quad 1 \leq j < i \leq n.
\end{equation*}
Let us denote by $a_i=\int_\Omega y_i |{\bf u}_1|^2dm$ to consider the vector-valued functions ${\bf w}_i$ given by
\begin{equation}\label{wi}
    {\bf w}_i = (y_i - a_i){\bf u}_1,
\end{equation}
so that 
\begin{equation*}
    {\bf w}_i|_{\partial \Omega}=0 \quad \mbox{and} \quad  \int_\Omega {\bf w}_i\cdot {\bf u}_{j+1} dm =0, \quad \mbox{for any} \quad j= 1, \ldots, i-1.
\end{equation*}
Then, we can take ${\bf v}={\bf w}_i$ in \eqref{rayleigh-1} and to use formula~\eqref{divformula} to obtain
\begin{equation}\label{equationn4.19}
    \sigma_{i+1} \|{\bf w}_i\|^2 \leq \int_\Omega ( -{\bf w}_i \cdot \mathscr{L}{\bf w}_i +  \alpha(\dv_\eta{\bf w}_i)^2)dm.
\end{equation}
 Using \eqref{equation2.2}  we get
\begin{align}\label{equationn4.20}
    &-\int_\Omega {\bf w}_i\cdot \mathscr{L}{\bf w}_idm =  -\int_\Omega {\bf w}_i\cdot[(y_i - a_i) \mathscr{L}{\bf u}_1 + {\bf u}_1 \mathscr{L}y_i + 2T(\nabla y_i, \nabla {\bf u}_1)]dm \nonumber\\
    &=  -\int_\Omega {\bf w}_i\cdot [(y_i - a_i)(-\sigma_1{\bf u}_1-\alpha\nabla \dv_\eta{\bf u}_1 ) + {\bf u}_1 \mathscr{L}y_i + 2T(\nabla y_i, \nabla {\bf u}_1)]dm \nonumber \\
    &= \sigma_1\|{\bf w}_i\|^2 +\alpha\int_\Omega (y_i-a_i){\bf w}_i \cdot \nabla \dv_\eta{\bf u}_1 dm - \int_\Omega {\bf w}_i\cdot ({\bf u}_1 \mathscr{L}y_i + 2T(\nabla y_i, \nabla {\bf u}_1))dm.
\end{align}
Using \eqref{property1} and \eqref{property2}, by a computation analogous to \eqref{eq4.7}, we obtain
\begin{align*}
   \alpha \int_\Omega (y_i-a_i){\bf w}_i \cdot \nabla \dv_\eta{\bf u}_1 dm =& - \alpha \int_\Omega (\dv_\eta{\bf w}_i)^2dm \\&- \alpha \int_\Omega (\nabla (\nabla y_i\cdot {\bf u}_1) + \dv_\eta {\bf u}_1\nabla y_i)\cdot {\bf w}_i dm.
\end{align*}
Substituting the previous equality into \eqref{equationn4.20}, we get
\begin{align}\label{equationn4.21}
    \int_\Omega(-{\bf w}_i\cdot \mathscr{L}{\bf w}_i+\alpha(\dv_\eta{\bf w}_i)^2)dm=&\sigma_1\|{\bf w}_i\|^2 - \alpha \int_\Omega (\nabla (\nabla y_i\cdot {\bf u}_1) + \dv_\eta {\bf u}_1\nabla y_i)\cdot {\bf w}_idm \nonumber\\
    &- \int_\Omega {\bf w}_i\cdot ({\bf u}_1 \mathscr{L}y_i + 2T(\nabla y_i, \nabla {\bf u}_1))dm.
\end{align}
Replacing \eqref{equationn4.21} into \eqref{equationn4.19}, we have
\begin{align}\label{equationn4.22}
    (\sigma_{i+1} - \sigma_1)\|{\bf w}_i\|^2 \leq & 
    -\int_\Omega {\bf w}_i\cdot ({\bf u}_1 \mathscr{L}y_i + 2T(\nabla y_i, \nabla {\bf u}_1))dm \nonumber\\
    &-\alpha \int_\Omega {\bf w}_i \cdot (\nabla (\nabla y_i\cdot {\bf u}_1) + \dv_\eta {\bf u}_1\nabla y_i) dm.
\end{align}
By a straightforward computation, we have, from \eqref{property1}, \eqref{property2}, \eqref{parts} and \eqref{wi},
\begin{align}\label{equationn4.23}
    - \int_\Omega {\bf w}_i\cdot ({\bf u}_1 \mathscr{L}y_i + 2T(\nabla y_i, \nabla {\bf u}_1))dm =  \int_\Omega |{\bf u}_1|^2T(\nabla y_i, \nabla y_i) dm.
\end{align}
\begin{align}\label{equationn4.24}
    - \alpha \int_\Omega {\bf w}_i \cdot (\nabla (\nabla y_i\cdot {\bf u}_1) + \dv_\eta {\bf u}_1\nabla y_i)dm = \alpha\int_\Omega |\nabla y_i \cdot {\bf u}_1|^2dm.
\end{align}
Therefore, substituting \eqref{equationn4.23} and \eqref{equationn4.24} into \eqref{equationn4.22} we obtain
\begin{align}\label{equationn4.25}
    (\sigma_{i+1} - \sigma_1)\|{\bf w}_i\|^2 \leq \int_\Omega |{\bf u}_1|^2T(\nabla y_i, \nabla y_i) dm + \alpha\int_\Omega |\nabla y_i \cdot {\bf u}_1|^2dm.
\end{align}
From \eqref{equationn4.23}, for any constant $B >0$,  we infer
\begin{align*}
    (\sigma_{i+1} - \sigma_1)&\int_\Omega |{\bf u}_1|^2T(\nabla y_i, \nabla y_i) dm \\
    =&  (\sigma_{i+1} - \sigma_1)\Big\{ - 2\int_\Omega {\bf w}_i\cdot \Big(\frac{1}{2}{\bf u}_1 \mathscr{L}y_i + T(\nabla y_i, \nabla {\bf u}_1)\Big)dm\Big\} \\
    \leq& 2 (\sigma_{i+1} - \sigma_1)\|{\bf w}_i\|\Big\| \frac{1}{2}{\bf u}_1 \mathscr{L}y_i + T(\nabla y_i, \nabla {\bf u}_1) \Big\| \\
    \leq& B(\sigma_{i+1} - \sigma_1)^2\|{\bf w}_i\|^2 + \frac{1}{B}\Big\| \frac{1}{2}{\bf u}_1 \mathscr{L}y_i + T(\nabla y_i, \nabla {\bf u}_1) \Big\|^2,
\end{align*}
hence using \eqref{equationn4.25} and the previous inequality we get
\begin{align}\label{equationn4.26}
    (\sigma_{i+1} - \sigma_1)&\int_\Omega |{\bf u}_1|^2T(\nabla y_i, \nabla y_i) dm \nonumber\\
\leq& B(\sigma_{i+1} - \sigma_1) \Big( \int_\Omega |{\bf u}_1|^2T(\nabla y_i, \nabla y_i) dm + \alpha\int_\Omega |\nabla y_i \cdot {\bf u}_1|^2dm \Big) \nonumber\\
    & +\frac{1}{B}\Big\| \frac{1}{2}{\bf u}_1 \mathscr{L}y_i + T(\nabla y_i, \nabla {\bf u}_1) \Big\|^2.
\end{align}
Summing over $i$ from $1$ to $n$ in \eqref{equationn4.26}, we conclude that
\begin{align}\label{equationn4.27}
\sum_{i=1}^n&(\sigma_{i+1} - \sigma_1)(1-B)\int_\Omega |{\bf u}_1|^2T(\nabla y_i, \nabla y_i) dm \nonumber\\ &\leq B\alpha\sum_{i=1}^n(\sigma_{i+1} - \sigma_1)\int_\Omega |\nabla y_i \cdot {\bf u}_1|^2dm
+\frac{1}{B}\sum_{i=1}^n\Big\| \frac{1}{2}{\bf u}_1 \mathscr{L}y_i + T(\nabla y_i, \nabla {\bf u}_1) \Big\|^2.
\end{align}
From the definition of $y_i$ and the fact that $S$ is an orthogonal matrix, we know that $\{y_i\}_{i=1}^n$ are also the coordinate functions in $\mathbb{R}^n$. Then, as in the proof of Theorem~\ref{theorem1.1}, we can also get
\begin{equation*}
   0 < \sum_{i=1}^n\Big\| \frac{1}{2}{\bf u}_1 \mathscr{L}y_i + T(\nabla y_i, \nabla {\bf u}_1) \Big\|^2 \leq \Big(\| T(\nabla {\bf u}_1) \| + \frac{1}{2}T_0\Big)^2 + C_0,
\end{equation*}
where $C_0$ is given by Eq.~\eqref{C_0}. Using \eqref{equationn4.27} and $\varepsilon I \leq T \leq \delta I$, we obtain
\begin{align}\label{equationn4.28}
    \sum_{i=1}^n(\sigma_{i+1} - \sigma_1)(\varepsilon - B(\delta + \alpha)) \leq \frac{1}{B}\Big\{  (\| T(\nabla {\bf u}_1) \| + \frac{1}{2}T_0)^2+C_0 \Big\}.
\end{align}
Since $B$ is an arbitrary positive constant, we can take 
\begin{equation*}
    B=\Bigg\{\frac{(\| T(\nabla {\bf u}_1) \| + \frac{1}{2}T_0)^2 + C_0}{(\delta+ \alpha)\sum_{i=1}^n(\sigma_{i+1} - \sigma_1)}\Bigg\}^{\frac{1}{2}}
\end{equation*}
into \eqref{equationn4.28} and therefore we get 
\begin{equation}\label{equation5.16}
    \sum_{i=1}^n(\sigma_{i+1} - \sigma_1) \leq \frac{4(\delta+\alpha)}{\varepsilon^2}\Big\{  (\| T(\nabla {\bf u}_1) \| + \frac{1}{2}T_0)^2 + C_0\Big\}.
\end{equation}
We can take $i = 1$ in inequality \eqref{equation5-4} and replace in \eqref{equation5.16} to obtain Theorem~\ref{theorem1.2}.
\end{proof}

\section{Divergence-free tensors case}\label{Sec-DCYOp}

This section is a generalization of some results of Section~\ref{section}. Here, we are assuming the tensor $T$ to be {\it divergence-free}, i.e., $\dv T =0$. Divergence-free tensors often arise from physical facts. We can find some of them in fluid dynamics, for instance, in the study of: compressible gas; rarefied gas; steady/self-similar flows and relativistic gas dynamics, see e.g. Serre~\cite{Serre}. We highlight that Serre's work deals with divergence-free positive definite symmetric tensors and fluid dynamics.

For divergence-free tensors, the operator $\mathscr{L}$ can be decomposed as follows
\begin{equation}\label{DCYOp}
    \mathscr{L}f = \square f - \langle\nabla \eta, T(\nabla f)\rangle,
\end{equation}
where $\square$ is the operator introduced by Cheng and Yau~\cite{ChengYau}, namely:
\begin{equation*}
    \square f = \tr{(\nabla^2f \circ T)}=\langle \nabla^2 f, T\rangle.
\end{equation*}

Cheng-Yau operator arise from the study of complete hypersurfaces of constant scalar curvature in space forms. For more details, the reader can be consult Gomes and Miranda~\cite{GomesMiranda}.

Eq.~\eqref{DCYOp} is a first-order perturbation of the  Cheng-Yau operator, and it defines a {\it drifted Cheng-Yau operator} which we denote by $\square_\eta$ with a drifting function $\eta$.

We now turn our attention to the main problem of this paper. Since $T$ is divergence-free, the coupled system of second-order elliptic differential equations \eqref{problem1} becomes
\begin{equation}\label{problem1-3}
    \left\{\begin{array}{ccccc} 
    \square_\eta  {\bf u} + \alpha \nabla(\dv_\eta {\bf u}) &=& -\sigma {\bf u} & \mbox{in } & \Omega,\\
     {\bf u}&=&0 & \mbox{on} & \partial\Omega, 
    \end{array} 
    \right.
\end{equation}
where ${\bf u}=(u^1, u^2, \ldots, u^n)$ is a vector-valued function from $\Omega$ to $\mathbb{R}^n$, the constant $\alpha$ is non-negative and $\square_\eta{\bf u}:=(\square_\eta u^1, \square_\eta u^2, \ldots,\square_\eta  u^n)$. Moreover, we have $\tr{(\nabla T)=0}$, because $T$ is divergence-free. Thus, the constant $C_0$ in \eqref{C_0} becomes
\begin{equation*}
C_0=\sup_{\Omega}{\Big \{}\frac{1}{2}\dv (T^2(\nabla \eta)) - \frac{1}{4}|T(\nabla \eta)|^2{\Big \}}.
\end{equation*}
Hence, from Theorems~\ref{theorem1.1} and \ref{theorem1.2} we immediately obtain the next two corollaries.
\begin{cor}\label{corollary-6.1}
Let $\Omega \subset \mathbb{R}^n$ be a  bounded domain, and ${\bf u}_i$ be a normalized eigenfunction corresponding to $i$-th eigenvalue $\sigma_i$ of Problem~\ref{problem1-3}. For any positive integer $k$, we get
\begin{equation*}
    \sum_{i=1}^k(\sigma_{k+1}-\sigma_i)^2 \leq \frac{4\delta(n\delta+\alpha)}{n^2\varepsilon^2}\sum_{i=1}^k(\sigma_{k+1}-\sigma_i)\Big(\sigma_i - \alpha \|\dv_\eta{\bf u}_i\|^2 + \frac{C_0}{\delta}\Big).
\end{equation*}
\end{cor}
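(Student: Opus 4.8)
The plan is to recognize Corollary~\ref{corollary-6.1} as the specialization of Theorem~\ref{theorem1.1} to the divergence-free setting, so that no new estimate is required. First I would observe that when $\dv T = 0$ the decomposition \eqref{DCYOp} gives $\mathscr{L}f = \square f - \langle \nabla\eta, T(\nabla f)\rangle = \square_\eta f$, whence $\mathscr{L}+\alpha\nabla\dv_\eta = \square_\eta + \alpha\nabla\dv_\eta$ and Problem~\eqref{problem1-3} coincides with Problem~\eqref{problem1}. Consequently the eigenvalues $\sigma_i$ and the normalized eigenfunctions ${\bf u}_i$ of \eqref{problem1-3} satisfy the hypotheses of Theorem~\ref{theorem1.1} verbatim, and the quadratic estimate stated there applies without modification.

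The decisive simplification comes from the quantity $T_0 = \sup_{\Omega}|\tr(\nabla T)|$. Since $T$ is divergence-free we have $\tr(\nabla T) = \dv T = 0$ on $\Omega$, so $T_0 = 0$. I would then substitute $T_0 = 0$ into the conclusion of Theorem~\ref{theorem1.1}. The perturbing term $\frac{T_0}{2\sqrt{\delta}}$ inside the square bracket disappears, so the bracket collapses from $\big[(\sigma_i - \alpha\|\dv_\eta{\bf u}_i\|^2)^{1/2} + \frac{T_0}{2\sqrt{\delta}}\big]^2$ to the single linear term $(\sigma_i - \alpha\|\dv_\eta{\bf u}_i\|^2)$, with no square root surviving.

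Finally I would record that the constant $C_0$ of \eqref{C_0} loses its last summand for the same reason: with $T_0 = 0$ the term $\frac{\delta}{2}T_0\eta_0$ vanishes, reducing $C_0$ to $\sup_{\Omega}\{\frac{1}{2}\dv(T^2(\nabla\eta)) - \frac{1}{4}|T(\nabla\eta)|^2\}$, which is exactly the expression recorded just before the statement. Assembling these observations, the right-hand side of Theorem~\ref{theorem1.1} becomes $\frac{4\delta(n\delta+\alpha)}{n^2\varepsilon^2}\sum_{i=1}^k(\sigma_{k+1}-\sigma_i)\big(\sigma_i - \alpha\|\dv_\eta{\bf u}_i\|^2 + \frac{C_0}{\delta}\big)$, which is precisely the asserted bound. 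There is essentially no obstacle here; the only point meriting care is confirming that $\tr(\nabla T)$ is literally the divergence of the tensor $T$, so that the divergence-free hypothesis genuinely forces $T_0 = 0$, while everything else reduces to direct substitution.
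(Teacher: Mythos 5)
Your proposal is correct and follows exactly the paper's route: the paper likewise observes that $\dv T=0$ forces $\tr(\nabla T)=0$, hence $T_0=0$ and $C_0$ reduces to $\sup_{\Omega}\{\tfrac{1}{2}\dv(T^2(\nabla\eta))-\tfrac{1}{4}|T(\nabla\eta)|^2\}$, after which the corollary is read off from Theorem~\ref{theorem1.1} by direct substitution. Nothing is missing.
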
 
\begin{cor}\label{corollary-6.2}
Let $\Omega \subset \mathbb{R}^n$ be a bounded domain, $\sigma_i$ be the $i$-th eigenvalue of Problem~\ref{problem1-3}, for $i=1,\ldots,n$, and ${\bf u}_1$ be a normalized eigenfunction corresponding to the first eigenvalue. Then, we get
\begin{equation*}
    \sum_{i=1}^n(\sigma_{i+1} - \sigma_1) \leq \frac{4\delta(\delta+\alpha)}{\varepsilon^2}(\sigma_1 +D_1),
\end{equation*}
where $D_1= -\alpha\|\dv_\eta {\bf u}_1\|^2 + \frac{C_0}{\delta}$.
\end{cor}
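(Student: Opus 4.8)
The plan is to obtain Corollary~\ref{corollary-6.2} as a direct specialization of Theorem~\ref{theorem1.2} to the divergence-free setting, exploiting the two simplifications that the hypothesis $\dv T=0$ produces. First I would record that, since $T$ is divergence-free, $\tr(\nabla T)=0$ on $\Omega$; indeed, from the identity for $\dv_\eta(T(\nabla x_\beta))$ used in the proof of Lemma~\ref{lema2} one reads off that $\tr(\nabla T)$ is precisely the divergence vector field of $T$, so $\dv T=0$ forces $\tr(\nabla T)=0$ and hence $T_0=\sup_\Omega|\tr(\nabla T)|=0$. Moreover, by the decomposition~\eqref{DCYOp} the operator $\mathscr{L}$ coincides with the drifted Cheng-Yau operator $\square_\eta$, so that Problem~\eqref{problem1-3} is exactly Problem~\eqref{problem1} for this choice of $T$; the eigenvalues $\sigma_i$ and eigenfunctions ${\bf u}_i$ in the statement are therefore those to which Theorem~\ref{theorem1.2} applies.

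Next I would substitute $T_0=0$ into the conclusion of Theorem~\ref{theorem1.2}. The bracketed term collapses, since
\begin{equation*}
\Big[(\sigma_1-\alpha\|\dv_\eta{\bf u}_1\|^2)^{\frac12}+\frac{T_0}{2\sqrt{\delta}}\Big]^2=\sigma_1-\alpha\|\dv_\eta{\bf u}_1\|^2,
\end{equation*}
and simultaneously the constant $C_0$ in~\eqref{C_0} loses its last summand $\frac{\delta}{2}T_0\eta_0$, reducing to the expression $C_0=\sup_\Omega\{\frac12\dv(T^2(\nabla\eta))-\frac14|T(\nabla\eta)|^2\}$ recorded above for the divergence-free case. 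Feeding these into Theorem~\ref{theorem1.2} yields
\begin{equation*}
\sum_{i=1}^n(\sigma_{i+1}-\sigma_1)\leq\frac{4\delta(\delta+\alpha)}{\varepsilon^2}\Big\{\sigma_1-\alpha\|\dv_\eta{\bf u}_1\|^2+\frac{C_0}{\delta}\Big\}.
\end{equation*}
Finally, setting $D_1=-\alpha\|\dv_\eta{\bf u}_1\|^2+\frac{C_0}{\delta}$ rewrites the right-hand bracket as $\sigma_1+D_1$, which is precisely the asserted inequality.

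Since every nontrivial estimate is already contained in Theorem~\ref{theorem1.2}, I do not anticipate a genuine obstacle; the whole content of the argument is bookkeeping in the specialization. The only point that warrants care is the verification that $\dv T=0$ indeed forces $\tr(\nabla T)=0$ (equivalently $T_0=0$), which is what makes both the square root term and the extra contribution to $C_0$ disappear. With that identification in hand the corollary is a one-line consequence of the theorem.
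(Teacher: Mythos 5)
Your proposal is correct and matches the paper's own argument: the paper likewise observes that $\dv T=0$ forces $\tr(\nabla T)=0$ (hence $T_0=0$ and the reduced form of $C_0$), and then obtains Corollary~\ref{corollary-6.2} immediately by specializing Theorem~\ref{theorem1.2}. The algebraic collapse of the bracketed term and the identification of $D_1$ are exactly as in the paper.
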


Now, from Corollary~\ref{corollary-6.1} and following the same steps of the proof of Corollary~\ref{cor_sharpgap}, we obtain the estimates.
\begin{cor}\label{corollary-6.3}
Under the same setup as in Corollary~\ref{corollary-6.1}, and by defining $D_0=-\alpha \min_{j=1, \ldots, k}\|\dv_\eta {\bf u}_j\|^2+\frac{C_0}{\delta}$, we have
\begin{align*}
    \sigma_{k+1} + D_0 \leq& \Big(1+\frac{2\delta(n\delta+\alpha)}{\varepsilon^2n^2}\Big)\frac{1}{k}\sum_{i=1}^k(\sigma_i + D_0) + \Big[\Big( \frac{2\delta(n\delta+\alpha)}{\varepsilon^2n^2}\frac{1}{k}\sum_{i=1}^k(\sigma_i + D_0)\Big)^2 \nonumber \\
   &-\Big(1+\frac{4\delta(n\delta+\alpha)}{\varepsilon^2n^2}\Big) \frac{1}{k}\sum_{j=1}^k\Big(\sigma_j-\frac{1}{k}\sum_{i=1}^k\sigma_i\Big)^2 \Big]^{\frac{1}{2}}
\end{align*}
and
\begin{eqnarray*}
\nonumber\sigma_{k+1} -\sigma_k &\leq& 2\Big[\Big( \frac{2\delta(n\delta+\alpha)}{\varepsilon^2n^2}\frac{1}{k}\sum_{i=1}^k(\sigma_i + D_0)\Big)^2\\
&&-\Big(1+\frac{4\delta(n\delta+\alpha)}{\varepsilon^2n^2}\Big) \frac{1}{k}\sum_{j=1}^k\Big(\sigma_j-\frac{1}{k}\sum_{i=1}^k\sigma_i\Big)^2 \Big]^{\frac{1}{2}}.
\end{eqnarray*}
\end{cor}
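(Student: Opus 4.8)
The plan is to follow the route announced in the statement: reduce everything to the single quadratic inequality that underlies Corollary~\ref{cor_sharpgap}, the only difference being the value of the numerical constant in front of the sum. First I would start from the estimate of Corollary~\ref{corollary-6.1}. Since $\alpha\geq 0$, for every index $i$ one has $-\alpha\|\dv_\eta{\bf u}_i\|^2\leq-\alpha\min_{j}\|\dv_\eta{\bf u}_j\|^2$, so that $\sigma_i-\alpha\|\dv_\eta{\bf u}_i\|^2+\frac{C_0}{\delta}\leq\sigma_i+D_0$, with $D_0$ exactly as in the statement. As the weights $\sigma_{k+1}-\sigma_i$ are nonnegative for $i\leq k$, substituting this bound into Corollary~\ref{corollary-6.1} gives
\begin{equation*}
    \sum_{i=1}^k(\sigma_{k+1}-\sigma_i)^2 \leq \frac{4\delta(n\delta+\alpha)}{n^2\varepsilon^2}\sum_{i=1}^k(\sigma_{k+1}-\sigma_i)(\sigma_i + D_0),
\end{equation*}
which is the precise analogue, in the divergence-free setting, of Inequality~\eqref{eq2.5}.

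Next I would set $v_i:=\sigma_i+D_0$ and $\gamma:=\frac{4\delta(n\delta+\alpha)}{n^2\varepsilon^2}$. Because $D_0$ is constant, $v_{k+1}-v_i=\sigma_{k+1}-\sigma_i$, and the previous display becomes
\begin{equation*}
    \sum_{i=1}^k(v_{k+1}-v_i)^2 \leq \gamma\sum_{i=1}^k(v_{k+1}-v_i)v_i.
\end{equation*}
By \eqref{equationn1.3} we have $0<v_1\leq\cdots\leq v_{k+1}$, the positivity being guaranteed since, $T$ being divergence-free, $T_0=0$, whence $\sigma_i-\alpha\|\dv_\eta{\bf u}_i\|^2+\frac{C_0}{\delta}>0$ by the Remark following Theorem~\ref{theorem1.1}. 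This inequality is formally identical to \eqref{equation1.13}, with the single replacement of the constant $\frac{4(n+\alpha)}{n^2}$ by $\gamma$. Consequently the entire algebraic argument behind Corollary~\ref{cor_sharpgap}, that is, the steps used for Inequalities~(1.3) and (1.4) of Gomes and Miranda~\cite{GomesMiranda}, applies verbatim, and the two asserted inequalities are read off from \eqref{new-sharpgap} and \eqref{new-sharpgap2} under the substitutions $\frac{2(n+\alpha)}{n^2}\to\gamma/2$ and $\frac{4(n+\alpha)}{n^2}\to\gamma$.

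Concretely, expanding the last display and dividing by $k$ produces the one-variable quadratic inequality
\begin{equation*}
    v_{k+1}^2-(2+\gamma)\bar v\, v_{k+1}+(1+\gamma)\frac{1}{k}\sum_{i=1}^k v_i^2\leq 0, \qquad \bar v:=\frac{1}{k}\sum_{i=1}^k v_i.
\end{equation*}
Solving this for $v_{k+1}$ and rewriting the discriminant through the variance identity $\frac{1}{k}\sum_i v_i^2-\bar v^2=\frac{1}{k}\sum_i(v_i-\bar v)^2=\frac{1}{k}\sum_i\big(\sigma_i-\frac{1}{k}\sum_j\sigma_j\big)^2$ yields, after returning to the $\sigma_i$, the first inequality of the corollary.

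The step I expect to require the most care is the gap estimate for $\sigma_{k+1}-\sigma_k$. It amounts to bounding $v_{k+1}-v_k$ by the distance between the two roots of the quadratic above, and this does \emph{not} follow from the upper bound $v_{k+1}\leq v_+$ alone: one must also control $v_k$ from below, showing that the largest of $v_1,\dots,v_k$ cannot drop beneath the smaller root $v_-$. This comparison is precisely the delicate algebraic kernel of the Gomes--Miranda argument, which I would import with $\gamma$ in place of $\frac{4(n+\alpha)}{n^2}$; everything preceding it is routine bookkeeping, and the substitution of constants is the only genuinely new content.
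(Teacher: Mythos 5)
Your proposal is correct and follows essentially the same route as the paper: the paper's own proof consists of passing from Corollary~\ref{corollary-6.1} to the shifted inequality in $v_i=\sigma_i+D_0$ (exactly your minimum trick, mirroring how \eqref{eq2.5} is obtained from \eqref{eq3.2}) and then invoking the algebraic argument behind Corollary~\ref{cor_sharpgap}, i.e.\ the proof of Inequalities~(1.3) and (1.4) in Gomes--Miranda, with the constant $\frac{4(n+\alpha)}{n^2}$ replaced by $\frac{4\delta(n\delta+\alpha)}{n^2\varepsilon^2}$. Your explicit expansion of the quadratic, the variance identity, and your flag that the gap estimate additionally requires bounding $v_k$ below by the smaller root (which does not follow from the single quadratic inequality alone) are all consistent with, and indeed slightly more detailed than, what the paper records.
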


Again from Corollary~\ref{corollary-6.1} and by applying the recursion formula of Cheng and Yang~\cite{ChengYang2}, we obtain the next corollary.
\begin{cor}\label{corollary3.3-CY}
Under the same setup as in Corollary~\ref{corollary-6.3}, we have
\begin{equation*}
    \sigma_{k+1} + D_0 \leq \Big(1+\frac{4\delta(\delta n+\alpha)}{\varepsilon^2n^2}\Big)k^{\frac{2\delta( n\delta+\alpha)}{\varepsilon^2 n^2}}(\sigma_1 + D_0).
\end{equation*}
\end{cor}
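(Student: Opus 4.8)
The plan is to mirror, step for step, the proof of Corollary~\ref{corollary3.3}: reduce the estimate of Corollary~\ref{corollary-6.1} to a quadratic inequality of Cheng--Yang type in the shifted quantities $\sigma_i + D_0$, and then feed this into the Cheng--Yang recursion formula. First I would start from the inequality of Corollary~\ref{corollary-6.1}. Since $\alpha \ge 0$ and $\|\dv_\eta {\bf u}_i\|^2 \ge \min_{j=1,\ldots,k}\|\dv_\eta {\bf u}_j\|^2$, one has
\begin{equation*}
\sigma_i - \alpha\|\dv_\eta {\bf u}_i\|^2 + \frac{C_0}{\delta} \le \sigma_i + D_0,
\end{equation*}
with $D_0=-\alpha \min_{j=1, \ldots, k}\|\dv_\eta {\bf u}_j\|^2+\frac{C_0}{\delta}$ as defined in Corollary~\ref{corollary-6.3}. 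Because each factor $\sigma_{k+1}-\sigma_i$ is nonnegative by~\eqref{equationn1.3}, I may enlarge the right-hand side of Corollary~\ref{corollary-6.1} accordingly to obtain
\begin{equation*}
\sum_{i=1}^k(\sigma_{k+1}-\sigma_i)^2 \le \frac{4\delta(n\delta+\alpha)}{n^2\varepsilon^2}\sum_{i=1}^k(\sigma_{k+1}-\sigma_i)(\sigma_i + D_0).
\end{equation*}

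Next I would set $v_i := \sigma_i + D_0$, so that $v_{k+1}-v_i = \sigma_{k+1}-\sigma_i$ and the estimate takes the clean form
\begin{equation*}
\sum_{i=1}^k(v_{k+1}-v_i)^2 \le \gamma \sum_{i=1}^k(v_{k+1}-v_i)v_i, \qquad \gamma := \frac{4\delta(n\delta+\alpha)}{n^2\varepsilon^2}.
\end{equation*}
It then remains only to verify the hypotheses under which the recursion formula applies. Monotonicity $v_1 \le v_2 \le \cdots \le v_{k+1}$ is immediate from~\eqref{equationn1.3}. Positivity $v_i>0$ follows from the divergence-free counterpart of the Remark after Theorem~\ref{theorem1.1}: here $\tr(\nabla T)=0$, so $T_0=0$ and the positivity of $\sigma_i-\alpha\|\dv_\eta{\bf u}_i\|^2+C_0/\delta$ (which is exactly the summand appearing in Corollary~\ref{corollary-6.1}) combines with $\sigma_i+D_0 \ge \sigma_i-\alpha\|\dv_\eta{\bf u}_i\|^2+C_0/\delta$ to give $v_i>0$.

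Finally I would invoke the recursion formula of Cheng and Yang~\cite[Corollary~2.1]{ChengYang2}. As already noted in the proof of Corollary~\ref{corollary3.3}, this formula holds for an arbitrary positive real parameter; applying it with $\frac{n^2\varepsilon^2}{\delta(n\delta+\alpha)}$ playing the role that $\frac{n^2}{n+\alpha}$ played there yields $v_{k+1} \le (1+\gamma)\,k^{\gamma/2}\,v_1$, which is precisely
\begin{equation*}
\sigma_{k+1} + D_0 \le \Big(1+\frac{4\delta(\delta n+\alpha)}{\varepsilon^2 n^2}\Big)k^{\frac{2\delta(n\delta+\alpha)}{\varepsilon^2 n^2}}(\sigma_1 + D_0).
\end{equation*}
I expect the only nonroutine point to be this last step, namely confirming that the Cheng--Yang recursion, typically stated with the integer-flavored constant $4/n$, remains valid for a general positive real constant $\gamma$; since their argument is purely algebraic in that parameter this presents no genuine difficulty, and the present corollary emerges as the exact divergence-free analogue of Corollary~\ref{corollary3.3}, with $n+\alpha$ and $n^2$ replaced by $\delta(n\delta+\alpha)$ and $n^2\varepsilon^2$.
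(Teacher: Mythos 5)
Your proposal is correct and is essentially the paper's own argument: the paper likewise passes from Corollary~\ref{corollary-6.1} to the shifted quantities $\sigma_i+D_0$ (exactly as Inequality~\eqref{eq2.5} is obtained from \eqref{eq3.2} via \eqref{Do and D1}) and then applies the Cheng--Yang recursion formula with the positive real parameter $\frac{n^2\varepsilon^2}{\delta(n\delta+\alpha)}$, just as in the proof of Corollary~\ref{corollary3.3}. The details you supply (monotonicity, positivity of the $\sigma_i+D_0$, and validity of the recursion for an arbitrary positive real constant) are precisely the steps the paper leaves implicit.
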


\section*{Acknowledgements} 
The authors would like to express their sincere thanks to Chang Yu Xia and Dragomir Mitkov Tsonev for useful comments, discussions and constant encouragement. The first author has been partially supported by Coordenação de Aperfeiçoamento de Pessoal de Nível Superior (CAPES) in conjunction with Fundação Rondônia de Amparo ao Desenvolvimento das Ações Científicas e Tecnológicas e à Pesquisa do Estado de Rondônia (FAPERO). The second author has been partially supported by Conselho Nacional de Desenvolvimento Científico e Tecnológico (CNPq), of the Ministry of Science, Technology and Innovation of Brazil.

\end{document}